\documentclass[11pt]{article}
\usepackage{amsfonts,amsmath,tikz}
\usepackage{epsfig}
\usepackage{latexsym}
\usepackage{color}
\usepackage{epsf,amssymb}

\usepackage{amsthm}
\usepackage{yfonts}
\usepackage{hyperref}
\usepackage{ifthen}
\usepackage{enumitem,enumerate}
\usepackage[T1]{fontenc} 
\usepackage{diagbox}
\usepackage{array}
\usetikzlibrary{arrows.meta}
\usepackage[normalem]{ulem}

\usepackage{algorithm}
\usepackage{algpseudocode}

\usepackage{float} 
\floatstyle{plaintop}
\restylefloat{table} 

\newtheorem{theorem}{Theorem}
\newtheorem{lemma}[theorem]{Lemma}

\newtheorem{corollary}[theorem]{Corollary}
\newtheorem{proposition}[theorem]{Proposition}
\newtheorem{problem}[theorem]{Problem}

\def\vertex(#1){\put(#1){\circle*{2}}}
\def\vertexo(#1){\put(#1){\circle{2}}}
\def\vert(#1){\put(#1){\circle*{1.5}}}
\def\verto(#1){\put(#1){\circle{1.5}}}
\def\lab(#1)#2{\put(#1){\makebox(0,0)[c]{#2}}}
\newcommand{\g}{\gamma}
\newcommand{\io}{\iota}
\newcommand{\es}{\alpha_{\rm es}}
\newcommand{\al}{\alpha}

\newcommand{\EPN}{\mathrm{epn}}

\tikzset{My Style/.style={draw, circle, fill=black,scale=0.3}} 

\title{On externally supported independence number of graphs}

\author{
Dragana Bo\v zovi\'{c}\textsuperscript{a,b} \and
Iztok Peterin \textsuperscript{a.c} \and
Adriana Roux \textsuperscript{d} \and 
Aleksandra Tepeh\textsuperscript{a,b,c}
}     

\begin{document}
\maketitle

\begin{center}
\textsuperscript{a}University of Maribor, FERI, Koroška cesta 46, 2000 Maribor, Slovenia \\
\textsuperscript{b}University of Maribor, FNM, Koroška cesta 160, 2000 Maribor, Slovenia \\
\textsuperscript{c}IMFM, Jadranska ulica 19, 1000 Ljubljana, Slovenia \\
\textsuperscript{d}Department of Mathematical Sciences, Stellenbosch University, Stellenbosch, 7600, South Africa
\end{center}

\begin{abstract}
We introduce the \emph{externally supported independence number} $\es(G)$ of a graph $G$ as the maximum cardinality of an independent set $B$ with an additional condition, that vertices from $N(B)$ are dominated by vertices in $V(G)-N[B]$. This parameter yields an improved upper bound on the isolation number $\io(G)$. We show that computing $\es(G)$ is NP-hard, while for trees we present a linear-time algorithm. We also establish several sharp bounds on  $\es(G)$ for general graphs, with additional refined results for trees. In several cases, we completely describe the extreme graph classes attaining these bounds.   
\end{abstract}

\noindent
{\bf Keywords:} externally supported independence number, isolation number, independence number, tree, computational complexity

\medskip

\noindent
{\bf Mathematics Subject Classification:} 05C69

\section{Introduction} 

In domination we are looking for the smallest set of vertices such that every vertex not in the set is adjacent to a vertex in the set, we say that such a set dominates the graph. Domination is often use for modeling facility location problems. For example, let a city be represented by a graph; determining where to place service facilities, so that people from every part of the city have the service in question close to their home is a facility location problem that can be solved with domination in graph theory. An optimal solution might not always be possible to implement, for example, desired locations may already be occupied by other activities, or too many services of the same kind may be too expensive, to mention just two. It is therefore often necessary to consider suitable relaxations.

One recent relaxation closely related to domination is isolation, where the chosen set is not required to dominate all the vertices of the graph. With isolation the focus falls on avoiding certain structures among the undominated vertices. Let $\mathcal{F}$ denote the family of graphs to avoid. A set of vertices will be an $\mathcal{F}$-isolating set if none of the graphs in $\mathcal{F}$ occurs as an induced subgraph of the graph that remains after removing the set and its neighbors. The minimum number $\io_{\mathcal{F}}(G)$ of such vertices is called the $\mathcal{F}$-isolation number of a graph $G$.

In 2010 Lewis et al.~\cite{LHHF} introduced the concept of isolation for $\mathcal{F}=\{K_2\}$ under the name of \emph{vertex-edge domination}. They characterized trees satisfying $\io_{\mathcal{F}}(T)=\g(T)$. \.{Z}yli\'{n}ski \cite{Zyli} and Ziemann proved a general upper bound on vertex-edge domination number of a graph and then later \.{Z}yli\'{n}ski \cite{ZiZy} improved this result for cubic graphs. Vertex-edge domination is a special case of $\mathcal{F}$-isolation where $\mathcal{F}=\{K_2\}$; in this case we refer simply to the \emph{isolation number} $\io(G)$ of a graph. Caro and Hansberg \cite{caro} presented several lower bounds together with the general upper bound $\io(G)\leq\frac{n(G)}{3}$ for connected graphs. Graphs attaining this upper bound were characterized among unicyclic and block graphs by Lema\'nska et al.~\cite{LMSS}, and a structural description was later given by Boyer and Goddard \cite{BoGo}. Recently the isolation number of Cartesian products and generalized Sierpi\'nski graphs was considered by Bre\v sar et al. \cite{BDJK}. Other choices of $\mathcal{F}$ have been studied as well, including all cycles \cite{Borg}, $C_4$ \cite{BaBS} and $K_k$ \cite{ChCZ}. Variants such as total isolation \cite{BoGH} and independent isolation \cite{BoGo2} have also been introduced.

To determine the isolation number $\io(G)$ of a graph $G$, one seeks a set $A\subseteq V(G)$ of minimum cardinality such that  $V(G)$ is partitioned into $A$, the neighborhood of $A$, and a third set that induces an independent set in $G$. In other words, all vertices that are not dominated by $A$ form an independent set in $G$. While many publications on the isolation number follow domination-oriented approach, we turn the table and put independent sets into the spotlight.
More precisely,  we study independent sets $B$ with the property that every vertex in $N(B)$
has a neighbor outside the closed neighborhood $N[B]$; we call such sets \emph{externally
supported independence sets}. In other words, we are interested in the independent set not dominated by an isolating set.

The paper is organized as follows. In the next section we present the notation and necessary definitions. In Section \ref{com} we introduce externally supported independence sets and the corresponding
parameter, and we study the computational complexity of the associated decision problem. In Section \ref{num} we derive several bounds for this new parameter and compare it with the
independence number. We also present an upper bound in terms of the order and the minimum
and maximum degree, and we characterize all graphs that admit no nonempty externally
supported independence set.
In Section \ref{tr} we focus on trees and characterize extremal trees for a lower and an upper bound, and present a linear-time algorithm for externally supported independence number of a tree. The concluding section presents possible  connections to other graph parameters and some ideas for further research.


\section{Preliminaries}

We use standard notation $[k]$ for the set $\{1,2,\dots,k\}$, and $K_n,\overline{K}_n,C_n,P_n$ for the complete graph, the edgeless graph, the cycle and the path, respectively, on $n$ vertices.  Throughout the paper, graphs are finite and simple.

Let $G$ be a graph on $n(G)$ vertices.
For $v\in V(G)$ we write $N_G(v)$ for its open neighborhood and $N_G[v]=N_G(v)\cup \{v\}$ for its closed neighborhood. 
For $A\subseteq V(G)$ we set $N_G[A]=\cup_{a\in A}N_G[a]$, $N_G(A)=N_G[A]-A$. When no confusion is likely, we omit the subscript $G$ and write $N(v)$, $N[v]$, $N(A)$,
and $N[A]$.
By $G[S]$ we denote the \emph{induced
subgraph} of $G$ on $S\subseteq V(G)$, that is, the graph with vertex set $S$ and edge set $E(G[S])=\{uv\in E(G): u,v\in S\}$.

A set $S\subseteq V(G)$ is an \emph{independent} set of $G$ if there are no edges in $G$ between vertices of $S$. Let $A\subseteq V(G)$ and $B=V(G)-N_G[A]$. The set $A$ is a \emph{dominating set} if $B=\emptyset$, and it is an \emph{isolating set} if $B$ is an independent set. We say that a vertex $v$ \emph{dominates} all vertices in $N[v]$. Clearly, every dominating set is isolating, but not every isolating set is dominating. 

The maximum cardinality of an independent set of $G$ is the \emph{independence number} $\al(G)$, the minimum cardinality of a dominating set of $G$ is the \emph{domination number} $\g(G)$ and the minimum cardinality of an isolating set of $G$ is the \emph{isolation number} $\io(G)$. An independent set of cardinality $\al(G)$ is called an $\al
(G)$-set, a dominating set of cardinality $\g(G)$ is a $\g(G)$-set, and an isolating set of cardinality $\io(G)$ is called $\io(G)$-set.

For $v\in V(G)$ let ${\rm deg}(v)=|N(v)|$ be the \emph{degree} of $v$. A vertex $v$ is \emph{universal} if ${\rm deg}(v)=n(G)-1$,  a \emph{leaf} if ${\rm deg}(v)=1$, and an \emph{isolated vertex} if ${\rm deg}(v)=0$. Let $L(G)$ be the set of all leaves of $G$ and set $\ell(G)=|L(G)|$. Locally, for each $v\in V(G)$ let $L(v)$ be the set of leaves adjacent to $v\in V(G)$ and set $\ell(v)=|L(v)|$. The neighbor of a leaf $v$ is called its \emph{support vertex}. Let $S(G)$ denote the set of all support vertices of $G$ and set $s(G)=|S(G)|$. For $v\in V(G)$ let  $S(v)$ be the set of all support vertices adjacent to $v$ and set $s(v)=|S(v)|$.

Distinct vertices $u$ and $v$ are \emph{twins} if $N[u]=N[v]$. Let $u$ and $v$ be twins. A vertex $w$ is a \emph{support of a twin} $u$ (and $v$) if $N[u]\subset N[w]$. We denote by $TS(G)$ the set of all twin support vertices of $G$.


\section{Definition and complexity}
\label{com}

Let $A$ be an isolation set of $G$. In this paper we focus on $B=V(G)-N[A]$, the independent set left after removing $A$ together with
its closed neighborhood. This viewpoint connects isolation to the well-developed theory of independent sets and motivates studying how large such a residual independent set can be. This leads to a refined notion of independent sets tailored to isolation.

First we recall the following bound from~\cite{caro}:
\begin{equation}\label{low_bound}
\io(G)\geq \left\lceil\frac{n(G)+1-\al(G)}{\Delta(G)+1}\right\rceil.
\end{equation}
Indeed, let $A$ be an $\io(G)$-set. Then $B=V(G)-N[A]$ is independent and,
since $A\neq\emptyset$, for any $x\in A$ the set $B\cup\{x\}$ is independent. Hence $|B|\le \al(G)-1$. Moreover, each vertex of $A$
dominates at most $\Delta(G)+1$ vertices, so $(\Delta(G)+1)|A|\ge n(G)-|B|$. Therefore
$$(\Delta(G)+1)\io(G)\ge n(G)-(\al(G)-1)=n(G)+1-\al(G),$$
which yields~\eqref{low_bound}. \\

Let $A$ be an isolating set of a graph $G$ and consider the remainder $B=V(G)-N[A]$.
By definition, $B$ is an independent set. However, not every independent set can occur
in this way: the condition that $B$ arises after removing $A$ together with its closed
neighborhood imposes an additional local constraint. That is, every vertex $v\in N(B)$
must be dominated by $A$, and since $A\subseteq V(G)-N[B]$, this is possible
only if $v$ has a neighbor outside $N[B]$. This observation leads to the following notion.

\medskip

A set $B \subseteq V(G)$ is called an \emph{externally supported independence set} (an \textit{ESI--set} for short) if $B$ is an independent set and every $v\in N(B)$ has a neighbor in $D=V(G)-N[B]$. The \emph{externally supported independence number} of $G$ (shortly the \emph{ESI--number}), denoted by $\es(G)$, is the maximum cardinality of an ESI-set in $G$. An ESI-set of cardinality $\es(G)$ will be referred to as $\es(G)$-set. For convenience, we shall refer to the requirement that every vertex in $N(B)$ has a neighbor in $D$ as the \emph{ES-condition}. 

This new definition results in the following reformulation of (\ref{low_bound}).

\begin{theorem} 
\label{gen_bound}
For any graph $G$ we have 
$$\io(G)\geq \left\lceil\frac{n(G)-\es(G)}{\Delta(G)+1}\right\rceil.$$ 
\end{theorem}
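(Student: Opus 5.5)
Let $A$ be an $\io(G)$-set and put $B=V(G)-N[A]$. The plan is to repeat the counting argument behind \eqref{low_bound}, but to bound $|B|$ by $\es(G)$ instead of by $\al(G)-1$. For this it suffices to show that $B$ is itself an ESI-set. Then $|B|\le \es(G)$, and the rest is arithmetic.

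\emph{Step 1: $B$ is an ESI-set.} Since $A$ is isolating, $B$ is independent. Let $v\in N(B)$. Then $v\notin B$, so $v\in N[A]$. Next we check that $v\notin A$. If $v\in A$, any neighbor $b\in B$ of $v$ would lie in $N[A]$, contradicting $b\in B=V(G)-N[A]$. Hence $v\in N(A)$, and $v$ has a neighbor $a\in A$.

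It remains to see that $a\in D=V(G)-N[B]$. Certainly $a\notin B$, because $B$ is disjoint from $N[A]\supseteq A$. Also $a$ has no neighbor in $B$, since otherwise that neighbor would lie in $N[A]$. Hence $a\notin N[B]$, i.e.\ $a\in D$. So every $v\in N(B)$ has a neighbor in $D$, which is the ES-condition. Therefore $B$ is an ESI-set and $|B|\le\es(G)$.

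\emph{Step 2: counting.} Each vertex of $A$ dominates at most $\Delta(G)+1$ vertices, and $N[A]=V(G)-B$. Hence
$$(\Delta(G)+1)\,\io(G)\ge |N[A]| = n(G)-|B|\ge n(G)-\es(G).$$
Dividing by $\Delta(G)+1$ and using that $\io(G)$ is an integer gives the ceiling bound. Since the case $A=\emptyset$ is handled correctly by the same inequalities, no separate treatment of it is needed.

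The only step that requires any care is Step 1, namely verifying that the isolating set $A$ itself lies inside $D$. This is the observation already made informally before the definition, and the argument above makes it precise.
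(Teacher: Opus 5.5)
Your proposal is correct and follows essentially the same route as the paper: take an $\io(G)$-set $A$, show that $B=V(G)-N[A]$ is an ESI-set because every vertex of $N(B)$ has a neighbor in $A\subseteq V(G)-N[B]$, and then count the vertices dominated by $A$. Your Step 1 just spells out in more detail the verification that the paper states in a single sentence.
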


\begin{proof}
Let $A$ be an $\io(G)$-set with $A'=V(G)-N[A]$.
Every vertex from $A$ dominates at most $\Delta(G)+1$ vertices of $G$, and on the other hand, $A$ needs to dominate $n(G)-|A'|$ vertices. 
Clearly, $A'$ is an ESI-set, since it is independent and each $v\in N(A')$ has a neighbor in
$A\subseteq V(G)-N[A']$. Thus $|A'|\le \es(G)$, and
we derive
$$(\Delta(G)+1)\io(G)\geq n(G)-|A'|\geq n(G)-\es(G),$$
which gives us the bound in the theorem. 
\end{proof}

Rearranging the inequality in Theorem \ref{gen_bound} yields the following lower bound on $\es(G)$.

\begin{corollary} 
\label{alpha-bound}
For any graph $G$ we have 
$$\es(G)\geq n(G)-\io(G)(\Delta(G)+1).$$
\end{corollary}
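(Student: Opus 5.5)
The plan is to derive the corollary directly from Theorem~\ref{gen_bound} by dropping the ceiling and rearranging. Set $\Delta=\Delta(G)$ and $n=n(G)$. Since $\lceil x\rceil\ge x$ for every real $x$, Theorem~\ref{gen_bound} gives
$$\io(G)\ \ge\ \left\lceil\frac{n-\es(G)}{\Delta+1}\right\rceil\ \ge\ \frac{n-\es(G)}{\Delta+1}.$$
Because $\Delta+1\ge 1>0$, we can multiply both sides by $\Delta+1$ without changing the direction of the inequality. This yields $\io(G)(\Delta+1)\ge n-\es(G)$, and moving terms across gives $\es(G)\ge n-\io(G)(\Delta+1)$, which is the claim.

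There is essentially no obstacle here. The only point to check is that the division by $\Delta(G)+1$ in Theorem~\ref{gen_bound} is legitimate for every graph, including edgeless ones, and it is, since $\Delta(G)\ge 0$.

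As a sanity check, I would also note that the inequality can be read off the proof of Theorem~\ref{gen_bound} directly. Take an $\io(G)$-set $A$. The remainder $A'=V(G)-N[A]$ is an ESI-set, so $|A'|\le\es(G)$. Moreover $A$ dominates all $n-|A'|$ vertices outside $A'$, and each vertex of $A$ dominates at most $\Delta+1$ vertices, so $n-|A'|\le(\Delta+1)\io(G)$. Combining the two gives $\es(G)\ge|A'|\ge n-\io(G)(\Delta+1)$. Either route suffices, and I would present the first, one-line version.
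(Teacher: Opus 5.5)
Your proposal is correct and matches the paper, which obtains the corollary simply by dropping the ceiling in Theorem~\ref{gen_bound} and rearranging. Your second route is the same argument: the proof of Theorem~\ref{gen_bound} already establishes $(\Delta(G)+1)\io(G)\ge n(G)-\es(G)$ before the ceiling is taken.
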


To make the bound in Theorem~\ref{gen_bound} effective, one needs to be able to compute or
estimate $\es(G)$. However, as expected, the following problem turns out to be NP-complete.

\begin{center}
\fbox{\parbox{0.8\linewidth}{\noindent
\textbf{\textsc{ESI (Externally Supported Independent) Set problem}}\\[.8ex]
\begin{tabular*}{.93\textwidth}{rl}
{\em Instance:} & A graph $G$ and a positive integer $k$.\\
{\em Question:} & Does $G$ contain an ESI-set of size at least $k$?
\end{tabular*}
}}
\end{center}

To see this, we describe the following construction of a graph $G$ from a given graph $H$. Let $H$ be any graph and let $\ell>n(H)$. We construct a new graph $M(H,K_{\ell})$ as follows: take one copy of $H$, one copy of the complete graph $K_{\ell}$, and add a perfect matching between the vertices of $H$ and an arbitrary set of $n(H)$ vertices of $K_{\ell}$. Clearly, the graph $M(H,K_{\ell})$ can be constructed in polynomial time. An example of $M(C_5,K_6)$ is shown in Figure \ref{grafMprimer}. We begin with the following lemma.

\begin{figure}[!ht]
\begin{center}
	\begin{tikzpicture}[scale=0.5]
		\tikzstyle{crn}=[circle,fill=black,draw, inner sep=0pt, minimum size=5pt]
			
		\begin{scope}
			\node (a)[crn] at (0 cm,0 cm){};
            \node (b)[crn] at (4 cm,0 cm){};
            \node (c)[crn] at (6 cm,1.5 cm){};
            \node (d)[crn] at (2 cm,2.5 cm){};
            \node (e)[crn] at (-2 cm,1.5 cm){};
            
            \draw (a)--(b)--(c)--(d)--(e)--(a);
		\end{scope}
			
		\begin{scope}[shift={(0,3)}]
			\node (a1)[crn] at (0 cm,0 cm){};
            \node (b1)[crn] at (4 cm,0 cm){};
            \node (c1)[crn] at (6 cm,1.5 cm){};
            \node (d1)[crn] at (2 cm,2.5 cm){};
            \node (e1)[crn] at (-2 cm,1.5 cm){};
            
            \draw (a1)--(b1)--(c1)--(d1)--(e1)--(a1);
            \draw (a1)--(c1)--(e1)--(b1)--(d1)--(a1);
		\end{scope}

		\begin{scope}[shift={(0,8)}]
			\node (t)[crn] at (2 cm,0 cm){};
            \draw (t)--(a1);
            \draw (t)--(b1);
            \draw (t)--(c1);
            \draw (t)--(d1);
            \draw (t)--(e1);
		\end{scope}

        \draw (a)--(a1);
        \draw (b)--(b1);
        \draw (c)--(c1);
        \draw (d)--(d1);
        \draw (e)--(e1);
    \end{tikzpicture}
\caption{The graph $M(C_5,K_6)$.}
\label{grafMprimer}
\end{center}
\end{figure}

\begin{lemma} 
\label{lemma:M}
Let $H\ncong K_n$ be a graph and let $\ell>n(H)$ be an integer. If $G=M(H,K_{\ell})$, then a set $B$ is an $\es(G)$-set if and only if it is an $\al(H)$-set.   
\end{lemma}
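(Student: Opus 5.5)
Let $V(H)=\{h_1,\dots,h_n\}$ with $n=n(H)$. In $K_\ell$, let $k_i$ be matched to $h_i$, and let $W$ be the set of the $\ell-n\ge 1$ unmatched vertices of $K_\ell$. The plan is to show two things: every $\al(H)$-set is an ESI-set of $G$, and every ESI-set of $G$ of size at least $\al(H)$ lies inside $V(H)$ and is independent there. Then $\es(G)=\al(H)$, and the $\es(G)$-sets are exactly the $\al(H)$-sets.

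\textbf{Step 1: every independent set of $H$ is an ESI-set of $G$.} Let $B\subseteq V(H)$ be independent in $H$; it is then independent in $G$. Its neighborhood is $N_G(B)=N_H(B)\cup\{k_i: h_i\in B\}$. Since $B\cap K_\ell$ is empty, no vertex of $W$ lies in $N[B]$, so $W\subseteq D=V(G)-N[B]$. Every $k_i$ with $h_i\in B$ is adjacent to $W$, so it has an external neighbor. Now take $h_j\in N_H(B)$. Its matched partner $k_j$ lies in $N[B]$ only if $h_j\in B$, which is false. Hence $k_j\in D$ and the ES-condition holds. So $\es(G)\ge\al(H)$.

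\textbf{Step 2: the structure of a large ESI-set.} Let $B$ be an ESI-set of $G$. Since $K_\ell$ is a clique, $B$ contains at most one vertex of $K_\ell$.
\begin{itemize}
\item If $B$ contains a vertex $w\in W$, then $N[w]\supseteq V(K_\ell)$, and $W$ is the only part of $K_\ell$ not matched to $H$. So $B=\{w\}\cup B_H$ with $B_H\subseteq V(H)$ and $B_H$ avoiding nothing further. In that case the other vertices of $K_\ell$ lie in $N(B)$, and every vertex of $K_\ell$ is in $N[B]$.
\item If $B$ contains $k_i$, then $h_i\notin B$.
\end{itemize}
In the case $k_i\in B$ I would argue as follows. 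Every vertex of $K_\ell-k_i$, including all of $W$ (nonempty), lies in $N(B)$. A vertex $w\in W$ has all its neighbors inside $K_\ell\subseteq N[B]$, so it has no external support, which contradicts the ES-condition. The same argument rules out $w\in B$: the other vertices of $W$, if any, would lack support. If $W=\{w\}$, the argument instead uses the vertices $k_j\in N(B)$, which need a neighbor $h_j\in D$.

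This is the delicate step, and it is where the hypothesis $H\ncong K_n$ enters. I expect the honest route to be a counting argument. A set $B$ containing a vertex of $K_\ell$ has the form $\{x\}\cup B_H$, and the ES-condition forces, for every $k_j\in N(B)$, that $h_j\notin N[B]$. Concretely:
\begin{itemize}
\item If $x=w\in W$, then every $h_j$ must avoid $N[B_H]$ unless $h_j\in\ldots$. Since all $k_j$ are in $N(B)$, no $h_j$ may lie in $N[B_H]$. This forces $B_H=\emptyset$, so $|B|=1$.
\item If $x=k_i$, then the vertices of $W$ are unsupported, giving a contradiction.
\end{itemize}
So any ESI-set meeting $K_\ell$ has size $1$. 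Since $H\ncong K_n$, we have $\al(H)\ge2$, so such a set is strictly smaller than $\al(H)$.

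\textbf{Step 3: conclusion.} Every ESI-set of size at least $\al(H)\ge 2$ is contained in $V(H)$ and is independent in $H$, so its size is at most $\al(H)$. Together with Step 1 this gives $\es(G)=\al(H)$. It also shows that $B$ is an $\es(G)$-set if and only if $B\subseteq V(H)$ is independent of size $\al(H)$, that is, an $\al(H)$-set.

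The main obstacle is the case analysis in Step 2 for sets meeting $K_\ell$. This relies on $\ell>n(H)$, which guarantees $W\neq\emptyset$.
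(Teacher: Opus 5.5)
Your proof is correct and takes essentially the same route as the paper. Step~1 is the paper's verification that every independent set of $H$ is an ESI-set of $G$. Step~2 rests on the same key observation: once $B$ meets $K_\ell$, any $h_j\in B$ leaves its partner $k_j$ with all neighbors in $N[B]$, so an ESI-set meeting $K_\ell$ is a singleton and is beaten by $\al(H)\ge 2$. You treat a matched vertex $k_i$ separately via the unsupported vertices of $W$, whereas the paper handles every vertex of $K_\ell$ uniformly. No counting argument is needed, so those exploratory remarks can be deleted.
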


\begin{proof}
Since $H\ncong K_n$, we have $\al(H)\geq 2$. 
Let $V(H)=\{v_1,\dots,v_{n(H)}\}$, $V(K_{\ell})=\{u_1,\dots,u_{\ell}\}$, and for each $i\in[n(H)]$, let $u_i$ be the vertex of $K_{\ell}$ matched to $v_i$ in the construction $M(H,K_{\ell})$.

We first show that every independent set of $H$ is an ESI-set of $G$. Let $A\subseteq V(H)$ be independent. If $v_i\in N(A)$, then its matched neighbor $u_i \notin N[A]$. Similarly, if $u_j\in N(A)$, then $v_j\in A$, and $u_j$ has a neighbor $u_{\ell}\notin N[A]$, since $\ell>n(H)$ and $u_{\ell}$ has no neighbors in $H$.
Therefore, $A$ satisfies the ES-condition. In particular, every $\al(H)$-set is an ESI-set of $G$, and hence $\es(G)\ge \al(H) \ge 2$.

Next we show that no $\es(G)$-set contains a vertex of $K_{\ell}$. Suppose for a contradiction that an $\es(G)$-set $B$ contains some $u_j\in V(K_{\ell})$. Since $u_j\in B$ and $K_{\ell}$ is complete, it follows that $V(K_{\ell})\subseteq N_G[B]$.
Now let $i\in[n(H)]$ with $i\neq j$. If $v_i\in B$, then $u_i\in N(B)$, but all neighbors of $u_i$ lie in $N_G[B]$, so $u_i$ has no neighbor in $V(G)-N_G[B]$, a contradiction with the ES-condition. Hence $v_i\notin B$ for every $i\neq j$. Also, if $j\leq n(H)$, then $v_j\notin B$, since $u_j$ is adjacent to $v_j$ and $B$ is independent. Therefore $B=\{u_j\}$, contradicting $\es(G)\ge 2$. Thus no $\es(G)$-set contains a vertex of $K_{\ell}$, and every $\es(G)$-set is contained in $V(H)$.

It follows that every $\es(G)$-set is an independent subset of $V(H)$, and hence has size at most $\al(H)$. Since every $\al(H)$-set is an ESI-set of $G$, we already know that $\es(G)\ge \al(H)$. Therefore $\es(G)=\al(H)$, which completes the proof.
\end{proof}

We have a polynomial-time construction $M(H,K_{\ell})$ for every non-complete graph $H$ and $\ell>n(H)$ such that $\es(G)=\al(H)$. 
The \textsc{ESI Set} problem belongs to NP, because for a given set $B$ we can verify in polynomial time that $B$ is independent and that every vertex in $N(B)$ has a neighbor in $V(G)\setminus N[B]$.
Since the \textsc{Independent Set} problem is NP-complete \cite{Garey}, this yields the following theorem.

\begin{theorem} 
\label{complex}
The \textsc{ESI Set} problem is NP-complete.
\end{theorem}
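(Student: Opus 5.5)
The plan is the standard two-part NP-completeness argument: membership in NP, then a polynomial reduction from \textsc{Independent Set}, using Lemma~\ref{lemma:M} for correctness.

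\emph{Membership in NP.} Take a candidate set $B$ as the certificate. We check in polynomial time that $|B|\ge k$ and that no edge of $G$ has both ends in $B$. We then compute $N[B]$ and $D=V(G)-N[B]$. Finally, for each $v\in N(B)$ we scan its adjacency list for a vertex of $D$. All of this is clearly polynomial in $n(G)$.

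\emph{Hardness.} Given an instance $(H,k)$ of \textsc{Independent Set}, first handle the degenerate cases directly. If $H\cong K_n$, then $\al(H)=1$, so we output a fixed trivial yes-instance or no-instance of \textsc{ESI Set} according to whether $k\le 1$; the same applies if $k>n(H)$. Otherwise choose $\ell=n(H)+1$ and build $G=M(H,K_\ell)$ in polynomial time, keeping the same $k$. By Lemma~\ref{lemma:M}, $\es(G)=\al(H)$. Hence $G$ has an ESI-set of size at least $k$ if and only if $\al(H)\ge k$, that is, if and only if $H$ has an independent set of size at least $k$. For a fixed trivial yes-instance we can take $K_1$ with $k=1$: there the single vertex is an ESI-set with $N(B)=\emptyset$. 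For a fixed no-instance we can take $K_1$ with $k=2$.

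Since \textsc{Independent Set} is NP-complete~\cite{Garey}, the theorem follows. The only point that needs care is that Lemma~\ref{lemma:M} requires $H$ to be non-complete; this is handled by the degenerate-case preprocessing above. Everything else is routine.
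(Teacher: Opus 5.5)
Your proof is correct and follows the same route as the paper. NP membership comes from checking independence and the ES-condition for a certificate $B$, and hardness comes from the construction $M(H,K_\ell)$ with $\ell>n(H)$ together with Lemma~\ref{lemma:M}, which gives $\es(M(H,K_\ell))=\al(H)$. You also handle complete $H$ explicitly, where the lemma does not apply; the paper leaves this implicit, and your treatment of it is right (the separate case $k>n(H)$ is harmless but unnecessary, since the reduction already maps it to a no-instance).
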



\section{Some exact results and bounds for ESI-number}
\label{num}

We begin with some basic bounds on $\es(G)$. Trivially, we have $\es(G)\geq 0$. Since every ESI-set is independent, 
$\es(G)$ is immediately comparable to the independence number $\alpha(G)$, and the trivial upper bound
$\es(G)\le \alpha(G)$ holds. Note that equality occurs for edgeless graphs $G=\overline{K}_n$ where the
whole vertex set is independent and $\es(G)=\alpha(G)=n$ follows. Hence, throughout the remainder of this section we assume that $G$ has at least one edge.

We continue with exact results for some well-known graph families, starting with the join $G\vee H$ of two graphs $G$ and $H$. Recall that $G\vee H$ contains disjoint copies of $G$ and $H$ together with all edges between $V(G)$ and $V(H)$. 


\begin{proposition}
\label{join}
If $G$ and $H$ are graphs of order $p\geq 1$ and $r\geq 1$, respectively, then 
\begin{equation*}
\es(G\vee H)=\left\{\begin{array}{ll}
\max\{\es(G)-1,\es(H)-1\} & \mbox{if } G\cong \overline{K}_p\ \&\ H\cong \overline{K}_r, \\[0.15cm]
\max\{\es(G),\es(H)-1\} & \mbox{if } G\ncong \overline{K}_p\ \&\ H\cong \overline{K}_r, \\[0.15cm]
\max\{\es(G)-1,\es(H)\} & \mbox{if } G\cong \overline{K}_p\ \&\  H\ncong \overline{K}_r, \\[0.15cm]
\max\{\es(G),\es(H)\} & \mbox{if }  G\ncong \overline{K}_p\ \&\  H\ncong \overline{K}_r. \\[0.15cm]
\end{array}\right.
\end{equation*}
\end{proposition}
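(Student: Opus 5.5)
The plan is to reduce everything to a single-graph quantity. First, any independent set $B$ of $G\vee H$ lies entirely in $V(G)$ or entirely in $V(H)$, because every vertex of $G$ is adjacent to every vertex of $H$. So it suffices to understand when a set $B\subseteq V(G)$ is an ESI-set of $G\vee H$; the case $B\subseteq V(H)$ is symmetric. The empty set is trivially an ESI-set and contributes $0$, which is dominated by every expression in the statement: when $p=1$ or $r=1$ the value $\es(\overline{K}_1)-1=0$ is still $\ge 0$.

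Fix a nonempty $B\subseteq V(G)$. In $G\vee H$ we have $N[B]=N_G[B]\cup V(H)$, $N(B)=N_G(B)\cup V(H)$ and $D=V(G\vee H)-N[B]=V(G)-N_G[B]$, so all external vertices lie in $G$. I will check the ES-condition in two parts.
\begin{itemize}
\item A vertex of $N_G(B)$ needs a neighbour in $D$; since $D\subseteq V(G)$, this is exactly the ES-condition in $G$.
\item A vertex of $V(H)$ is adjacent to all of $V(G)$, so it has a neighbour in $D$ if and only if $D\neq\emptyset$.
\end{itemize}
Hence $B$ is an ESI-set of $G\vee H$ if and only if $B$ is an ESI-set of $G$ with $N_G[B]\neq V(G)$. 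Writing $f(G)$ for the maximum size of such a set (with $f(G)\ge 0$ from $B=\emptyset$), we get $\es(G\vee H)=\max\{f(G),f(H)\}$.

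It remains to compute $f$, and I expect this small step to be the only real point of the proof.
\begin{itemize}
\item If $G\ncong\overline{K}_p$, I claim every ESI-set $B$ of $G$ satisfies $N_G[B]\neq V(G)$, so $f(G)=\es(G)$. Suppose instead that $N_G[B]=V(G)$. Then $D=\emptyset$, so the ES-condition forces $N_G(B)=\emptyset$, hence $B=V(G)$. Since $B$ is independent, $G$ would be edgeless, a contradiction.
\item If $G\cong\overline{K}_p$, every subset is an ESI-set of $G$, and $N_G[B]=B$. The condition $N_G[B]\neq V(G)$ therefore means exactly $B\neq V(G)$, so $f(G)=p-1=\es(G)-1$.
\end{itemize}

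Substituting these values of $f(G)$ and $f(H)$ into $\es(G\vee H)=\max\{f(G),f(H)\}$ gives the four cases of the statement directly.
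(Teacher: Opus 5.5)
Your proof is correct and follows essentially the same route as the paper: independent sets of $G\vee H$ lie on one side, the vertices of the other side require $V(G)-N_G[B]\neq\emptyset$, and this holds automatically for non-edgeless $G$ but costs one vertex when $G\cong\overline{K}_p$. Your exact characterization ($B$ is an ESI-set of $G\vee H$ iff it is an ESI-set of $G$ with $N_G[B]\neq V(G)$) packages the paper's two inequalities into one step, and it makes explicit the verifications that the paper only calls straightforward.
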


\begin{proof}
Let $B_G$ and $B_H$ be an $\es(G)$-set and an $\es(H)$-set, respectively. Whenever these
sets are nonempty, choose  $g\in B_G$ and $h\in B_H$. Since $G\vee H$ contains all edges between $V(G)$ and $V(H)$, every independent
set of $G\vee H$ is contained entirely in $V(G)$ or entirely in $V(H)$. Thus, it is straightforward to verify that the following sets are ESI-sets of $G\vee H$:
\begin{itemize}
\item $B_G-\{g\}$ and $B_H-\{h\}$ when $G\cong \overline{K}_p$ and $H\cong \overline{K}_r$;
\item $B_G$ and $B_H-\{h\}$ when $G\ncong \overline{K}_p$ and $H\cong \overline{K}_r$;
\item $B_G-\{g\}$ and $B_H$ when $G\cong \overline{K}_p$ and $H\ncong \overline{K}_r$;
\item $B_G$ and $B_H$ when $G\ncong \overline{K}_p$ and $H\ncong \overline{K}_r$.
\end{itemize}
Hence, $\es(G\vee H)\geq R$, where $R$ is the right-hand side of the claimed equality.

Assume for a contradiction that $\es(G\vee H)>R\geq 0$. Let $B\neq\emptyset$ be an $\es(G\vee H)$-set. As observed before, we have either $B\subseteq V(G)$ or $B\subseteq V(H)$. Without loss of generality, assume that
$B\subseteq V(H)$. If $|B|>\es(H)$, then the ES-condition fails for some vertex in $H$ and for the same vertex the ES-condition fails also in $G\vee H$, a contradiction. So, $|B|\leq \es(H)$, and the only remaining possibility is $|B|=\es(H)$ and $R=\es(H)-1$, which implies $H\cong \overline{K}_r$. In this case $B=V(H)$, and every vertex from $G$ violates the ES-condition, a contradiction again. So, $\es(G\vee H)\leq R$ and the desired equality follows.
\end{proof}

In the proof of the next result we will use the fact that several well-known graph families can be expressed as joins. For example, complete graphs satisfy $K_n=K_p\vee K_{n-p}$ for $1\leq p\leq n-1$, complete bipartite graphs  satisfy $K_{p,q}=\overline{K}_p\vee \overline{K}_q$ for $p,q\geq 1$, for wheels we have $W_n=K_1\vee C_{n-1}$ for $n\geq 4$, and for fans $F_n=K_1\vee P_{n-1}$ for $n\geq 3$.

\begin{proposition}
\label{families}
The following holds.
\begin{itemize}
\item[(i)] For every $n\geq 3$ we have $\es(P_n) = \lceil\frac n4\rceil$.
\item[(ii)] For every $n\geq 4$ we have $\es(C_n) = \lfloor\frac n4\rfloor$. 
\item[(iii)] For every $n\geq 2$ we have $\es(K_n) = 0$.
\item[(iv)] For every $n\geq 3$ we have $\es(K_{1,n-1}) = n-2$.
\item[(v)] For every $m,n\geq 2$ we have $\es(K_{m,n}) = \max\{m-1,n-1 \}$.
\item[(vi)] For every $n\geq 4$ we have $\es(W_n) = \lfloor\frac {n-1}{4}\rfloor$.
\item[(vii)] For every $n\geq 4$ we have $\es(F_n) = \lceil\frac {n-1}{4}\rceil$.
\end{itemize}
\end{proposition}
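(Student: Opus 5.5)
The plan is to prove (i) and (ii) directly, and then obtain (iii)–(vii) from Proposition~\ref{join} via the join decompositions listed before the statement.

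\textbf{Paths and cycles.} For a path or cycle, an ESI-set $B$ has a simple structure. Each $b\in B$ is an interior vertex with two neighbors in $N(B)$, or an endpoint of the path with one neighbor. Each $v\in N(B)$ needs a neighbor in $D=V-N[B]$. Hence no vertex of $N(B)$ can be adjacent to two vertices of $B$, so consecutive vertices of $B$ are at distance at least $4$. Moreover, each vertex of $N(B)$ needs its own private $D$-neighbor on the far side. Thus around each $b$ the pattern $d\,x\,b\,x\,d$ occurs, and the $D$-vertices may be shared between consecutive blocks.

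\textbf{The count for $C_n$.} Walking around $C_n$, consecutive elements of $B$ are at distance $\ge 4$. This gives $4|B|\le n$, so $|B|\le\lfloor n/4\rfloor$. Equality is attained by taking every fourth vertex, using the pattern $b\,x\,d\,x$ repeated and padding any leftover vertices into $D$. A leftover vertex lies between an $x$ and the next $x$, and the ES-condition still holds there. One must check that for $n\ge4$ with $n\not\equiv0 \pmod 4$, the extra $1$–$3$ vertices placed in $D$ keep the condition; they do.

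\textbf{The count for $P_n$.} Here an endpoint can belong to $B$ with only one neighbor $x$ requiring a $D$-vertex. The lower bound comes from the sequence $v_1,v_5,v_9,\dots$, i.e.\ positions $\equiv 1 \pmod 4$. The last chosen vertex $v_{4k+1}$ needs $v_{4k+2}$ to have a neighbor $v_{4k+3}$ in $D$, so it is valid when $4k+3\le n$ or when $v_{4k+1}$ is the endpoint. This gives $\lceil n/4\rceil$ after a small case check on $n \bmod 4$, shifting the last element if needed. For the upper bound I would argue that between consecutive $B$-vertices there are at least $3$ vertices, which yields $n\ge 4|B|-3$ and hence $|B|\le\lfloor (n+3)/4\rfloor=\lceil n/4\rceil$. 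One must also check that if an endpoint is not in $B$, nothing better is gained. I expect this bookkeeping at the ends of the path to be the main fiddly step.

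\textbf{The join families.} These follow from Proposition~\ref{join}.
\begin{itemize}
\item (iii): $K_n=K_1\vee K_{n-1}$ with $\es(K_1)=1$, since $\{v\}$ is trivially ESI in $K_1$. For $n\ge3$, $K_{n-1}$ is not edgeless, and an induction using $\es(K_{m})=0$ for $m\ge2$ gives $\max\{0,\es(K_{n-1})\}=0$. The case $n=2$ is the edgeless–edgeless case $\max\{1-1,1-1\}=0$. Alternatively, (iii) can be checked directly.
\item (iv) and (v): $K_{m,n}=\overline K_m\vee\overline K_n$ with $\es(\overline K_p)=p$, which gives $\max\{m-1,n-1\}$. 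This covers (iv) as well, with $m=1$: $\max\{0,n-2\}=n-2$.
\item (vi): $W_n=K_1\vee C_{n-1}$. The factor $K_1$ is edgeless and $C_{n-1}$ is not, so the third case gives $\max\{0,\es(C_{n-1})\}$. This equals $\lfloor (n-1)/4\rfloor$ by (ii) for $n-1\ge4$; for $n=4$, $C_3=K_3$ and (iii) gives $0$.
\item (vii): $F_n=K_1\vee P_{n-1}$ gives $\max\{0,\es(P_{n-1})\}=\lceil (n-1)/4\rceil$ by (i), valid since $n-1\ge3$.
\end{itemize}
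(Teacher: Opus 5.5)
Your proposal is correct and follows essentially the same route as the paper: the distance-at-least-$4$ argument with explicit every-fourth-vertex constructions for (i) and (ii), and Proposition~\ref{join} for (iv)--(vii), with (iii) checkable directly as the paper does. Your treatment of $W_4$ via $C_3=K_3$ and (iii) is slightly more careful than the paper's, whose appeal to (ii) formally requires $n-1\ge 4$.
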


\begin{proof}
\textit{(i)} Let $B\subseteq V(P_n)$ be an $\es(P_n)$-set where $P_n=x_1x_2\ldots x_n$. Clearly, $\es(P_3)=\es(P_4)=1$. Now assume that $n\geq 5$. We claim that the distance in $P_n$ between any two distinct vertices of $B$ is at least 4. Indeed, since $B$ is independent, distance 1 is impossible. If two vertices $x_i, x_{i+2}\in B$, then $x_{i+1}\in N(B)$ has no neighbor in $V(P_n)-N[B]$, contradicting the ES-condition. Similarly, if $x_i, x_{i+3}\in B$, then $x_{i+1},x_{i+2}\in N(B)$ have no neighbor in $V(P_n)-N[B]$, again contradicting the ES-condition. Hence any two vertices of $B$ are at distance at least 4 and $\es(P_n)\le \lceil \frac n4 \rceil$. To show the opposite inequality let $k=\lfloor \frac{n-1}{4} \rfloor$, consider the set $B=\{x_{4i-3}\in V(P_n); \; i\in [k]\}\cup\{x_n\}$. It is straightforward to verify that $B$ is independent and every vertex of $N(B)$ has a neighbor outside $N[B]$, hence $B$ satisfies the ES-condition. Therefore, $\es(P_n)\geq|B|=\lceil \frac n4 \rceil$.

\medskip 
\textit{(ii)} Let $C_n=x_1x_2\ldots x_n x_1$. The argument for the upper bound is analogous to the one for paths. In particular, if $B\subseteq V(C_n)$ is an $\es(C_n)$-set and $x_i\in B$, then none of $x_{i-3},x_{i-2},x_{i-1},x_{i+1},x_{i+2},x_{i+3}$ (indices taken modulo $n$) can belong to $B$, since otherwise one obtains a vertex in $N(B)$ whose both neighbors lie in $N[B]$, contradicting the ES-condition. Hence, vertices of $B$ are pairwise at distance at least 4, which implies $4|B|\leq n$ and therefore $\es(C_n)=|B|\leq \lfloor \frac n4 \rfloor$. To show the opposite inequality, let $k=\lfloor \frac n4 \rfloor$ and consider the set $B=\{x_{4i-3}\in V(C_n); \; i\in [k]\}$. It is straightforward to verify that $B$ is independent and every vertex of $N(B)$ has a neighbor outside $N[B]$, hence $B$ satisfies the ES-condition. Therefore, $\es(C_n)\geq|B|=\lfloor \frac n4 \rfloor$.

\medskip
For \textit{(iii)} observe that $N[v]=V(K_n)$ holds for every vertex $v\in K_n$ and no vertex can be in an ESI-set. So, ESI-set of $K_n$ is empty and thus $\es(K_n)=0$. In addition, \textit{(iv), (v), (vi), (vii)} follow directly from Proposition \ref{join}, and additionally by \textit{(ii)} for $W_n$, and \textit{(i)} for $F_n$.
\end{proof}

We can use \textit{(i)} and \textit{(ii)} of Proposition \ref{families} to motivate the study of $\es(G)$. Let $G$ be $P_n$ or $C_n$. Then $\al(G)$ is about $\frac{n}{2}$, whereas $\es(G)$ is about $\frac{n}{4}$. Since $\Delta(G)=2$, 
the bound (\ref{low_bound}) gives a lower bound 
on $\io(G)$ of about $\frac{n}{6}$, whereas Theorem \ref{gen_bound} gives a lower bound of about $\frac{n}{4}$, which is substantially stronger.

\medskip

If a graph $G$ has at least one edge we can improve the trivial bound as follows.

\begin{theorem} 
\label{alpha_up_bound}
For any graph $G$ with at least one edge we have 
$$0\leq\es(G)\leq\al(G)-1.$$
\end{theorem}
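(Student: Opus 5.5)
The lower bound $0\le\es(G)$ is trivial, since the empty set is an ESI-set: it is independent and $N(\emptyset)=\emptyset$. So everything rests on the upper bound $\es(G)\le\al(G)-1$. The plan is to take an $\es(G)$-set $B$ and show that it is never a maximum independent set. Concretely, I will exhibit a vertex $x\notin B$ such that $B\cup\{x\}$ is still independent, which gives $|B|+1\le\al(G)$.

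The key step is to produce a vertex of $D=V(G)-N[B]$. Any vertex of $D$ has no neighbor in $B$, so it can be added to $B$ without destroying independence. I split into two cases according to whether $N(B)$ is empty.

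\begin{itemize}
\item \emph{Case $N(B)\neq\emptyset$.} Pick $v\in N(B)$. By the ES-condition, $v$ has a neighbor $x\in D$, so in particular $D\neq\emptyset$.
\item \emph{Case $N(B)=\emptyset$.} Every vertex of $B$ is isolated in $G$. Since $G$ has at least one edge, some endpoint $x$ of that edge lies outside $B$. Because $N(B)=\emptyset$, this $x$ is not in $N[B]$, so $x\in D$.
\end{itemize}

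In both cases $B\cup\{x\}$ is independent, hence $\al(G)\ge|B|+1=\es(G)+1$. This mirrors the argument given after~\eqref{low_bound} with $A\ne\emptyset$. The only point needing care is the case $N(B)=\emptyset$, and in particular $B=\emptyset$. That is precisely where the hypothesis that $G$ has at least one edge is used, since for edgeless graphs $\es(G)=\al(G)$ by the preceding remark.
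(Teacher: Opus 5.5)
Your proof is correct and is essentially the paper's argument: both produce a vertex of $D=V(G)-N[B]$ (from the ES-condition when $N(B)\neq\emptyset$, and otherwise from an endpoint of an edge) and add it to $B$ to get an independent set of size $|B|+1$. Your case split on whether $N(B)$ is empty is a slightly more streamlined version of the paper's case analysis on where the edge $uv$ lies relative to $B$ and $N(B)$.
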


\begin{proof}
The inequality $\es(G)\ge 0$ is trivial. To prove the upper bound, let $uv\in E(G)$ and suppose for a contradiction that $\es(G)=\al(G)$. Let $B$ be an $\es(G)$-set. If $u\in B$, then $v\in N(B)$, and by the ES-condition $v$ has a neighbor $x$ in $V(G)-N[B]$. Since $x\notin N[B]$, the vertex $x$ is not adjacent to any
vertex of $B$, and hence $B\cup\{x\}$ is an independent set of $G$ with more vertices than $B$, a contradiction. The same argument applies if $v\in B$. 
Thus, $u,v\notin B$. If $\{u,v\}\cap N(B)\neq\emptyset$, then the ES-condition yields $x\in V(G)-N[B]$
and we argue exactly as in the previous case to get a contradiction. Therefore $u,v\notin N[B]$. Therefore, $u,v\notin N[B]$. In particular, $v$ is not adjacent to any vertex of $B$, and since also $v\notin B$, the set
$B\cup\{v\}$ is independent of size $|B|+1$, a final contradiction. Hence $\es(G)\le \al(G)-1$.
\end{proof}

We next characterize graphs with $\es(G)=0$, that is, graphs that admit no nonempty ESI-set. Our main tool is the following local criterion, which identifies vertices that cannot belong to any ESI-set.

\begin{lemma}
\label{notinESI}
Let $G$ be a graph and let $x\in V(G)$. Then  $x$ does not belong to any ESI-set of $G$ if and only if there exists $y\in N(x)$ such that $N[y]\subseteq N[x]$.
\end{lemma}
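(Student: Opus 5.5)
The plan is to prove both directions directly from the definition of an ESI-set. The key observation is that the singleton $\{x\}$ is the "hardest" ESI-set containing $x$ to rule out. So $x$ lies in some ESI-set if and only if $\{x\}$ is itself an ESI-set. I will not try to prove this equivalence as a separate claim; it falls out of the two directions below.

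\emph{Sufficiency.} Suppose $y\in N(x)$ satisfies $N[y]\subseteq N[x]$, and let $B$ be any independent set containing $x$. Then $y\in N(B)$, since $y\notin B$ by independence. Moreover, every neighbor of $y$ lies in $N[x]\subseteq N[B]$. Hence $y$ has no neighbor in $V(G)-N[B]$, so the ES-condition fails at $y$. Therefore no ESI-set contains $x$.

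\emph{Necessity.} I argue by contraposition. Assume that for every $y\in N(x)$ we have $N[y]\not\subseteq N[x]$. Then each such $y$ has a neighbor $z\notin N[x]$. Now take $B=\{x\}$. It is trivially independent, and $N(B)=N(x)$. Each $y\in N(x)$ has a neighbor $z\in V(G)-N[x]=V(G)-N[B]$. So $\{x\}$ satisfies the ES-condition and is an ESI-set containing $x$. This also covers the case where $x$ is isolated: then $N(x)=\emptyset$, the condition on $y$ holds vacuously, and $\{x\}$ is an ESI-set, consistent with the statement.

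I expect no real obstacle. The only point needing care is the sufficiency step. There one must observe that $N[x]\subseteq N[B]$ for every $B$ containing $x$, so the failure of the ES-condition at $y$ is independent of the other vertices in $B$. This monotonicity is exactly why testing the singleton $\{x\}$ suffices in the necessity direction.
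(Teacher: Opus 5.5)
Your proposal is correct and follows essentially the same route as the paper's proof. Both directions use the same ideas: the singleton $\{x\}$ for necessity (you argue it by contraposition, the paper directly), and for sufficiency the observation that $N[y]\subseteq N[x]\subseteq N[B]$ for any independent $B\ni x$, so the ES-condition fails at $y$.
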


\begin{proof} 
Assume first that $x$ belongs to no ESI-set. In particular, the singleton $\{x\}$ is not an ESI-set. Since $\{x\}$ is independent, it must fail  the ES-condition, and hence there exists $y\in N(x)$ that has no neighbor in $V(G)-N[x]$. Equivalently, $N(y)\subseteq N[x]$, and since $y\in N(x)$, we obtain $N[y]\subseteq N[x]$.

Conversely, assume that there exists $y\in N(x)$ with $N[y]\subseteq N[x]$.
Let $B$ be any independent set with $x\in B$. Then $y\in N(B)$ and $N(y)\subseteq N[y]\subseteq N[x]\subseteq N[B]$, so $y$ has no neighbor in $V(G)-N[B]$. Hence $B$ fails the ES-condition and therefore
cannot be an ESI-set. Since $B$ was arbitrary, $x$ belongs to no ESI-set.
\end{proof}

As an immediate consequence of Lemma~\ref{notinESI}, we obtain the following forbidden-vertex
conditions for ESI-sets.

\begin{corollary}
\label{notESI1}
If $B$ is an ESI-set of a graph $G$, then $B$ contains neither a universal vertex, nor a
support vertex, nor a twin, nor a support of a twin.
\end{corollary}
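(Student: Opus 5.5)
The plan is to check each of the four vertex types against the criterion of Lemma~\ref{notinESI}. In every case we exhibit a neighbor $y$ of $x$ with $N[y]\subseteq N[x]$. Then $x$ belongs to no ESI-set, and in particular not to $B$. So the work is four short verifications, one per vertex type.

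\emph{Universal vertex.} If $x$ is universal, then $N[x]=V(G)$. Since $G$ has at least one edge, a universal $x$ has a neighbor $y$ (this needs $n(G)\ge 2$). Any such $y$ satisfies $N[y]\subseteq V(G)=N[x]$.

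\emph{Support vertex.} If $x$ is a support vertex, take $y$ to be a leaf adjacent to $x$. Then $N[y]=\{x,y\}\subseteq N[x]$.

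\emph{Twin.} If $x$ has a twin $u$, then $N[x]=N[u]$ with $u\neq x$. This forces $u\in N[x]$, so $u\in N(x)$. Taking $y=u$ gives $N[y]=N[x]$.

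\emph{Support of a twin.} If $x$ is a support of twins $u,v$, then $N[u]\subset N[x]$. As $u\in N[u]$, we get $u\in N[x]$. Moreover $u\neq x$: otherwise $N[u]\subset N[u]$ would be a proper inclusion of a set in itself. Hence $u\in N(x)$, and $y=u$ works.

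The only point needing care is confirming that the chosen $y$ is genuinely a neighbor of $x$ and not $x$ itself. This matters in the universal case, where we need $n(G)\ge2$, and in the twin-support case, where the strict inclusion is what rules out $u=x$. The inclusion $N[y]\subseteq N[x]$ is immediate in all four cases, so I expect no real obstacle.
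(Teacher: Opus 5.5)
Your proposal is correct and takes the paper's route: the paper states the corollary as an immediate consequence of Lemma~\ref{notinESI}, and your four case checks (a leaf neighbor for a support vertex, any neighbor of a universal vertex, the twin itself, and the dominated twin for a support of a twin) are exactly the verifications the paper leaves implicit. Your remark that the universal case needs $n(G)\ge 2$ is correct, since $\{x\}$ is an ESI-set of $K_1$, and the section's standing assumption that $G$ has at least one edge covers it.
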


We next present two characterizations of graphs with $\es(G)=0$, that is, graphs admitting no ESI-set.
First we give a neighborhood-inclusion condition, and then we refine it into an explicit description of the
vertex types that force $\es(G)=0$.

\begin{theorem}
\label{zero}
The following statements are equivalent for a graph $G$:
\begin{itemize}
\item[(i)] $\es(G)=0$;
\item[(ii)] every vertex $x\in V(G)$ has a neighbor $y\in N_G(x)$ such that $N_G[y]\subseteq N_G[x]$;
\item[(iii)] every vertex of $G$ is a universal vertex or a twin or support of a twin. 
\end{itemize}
\end{theorem}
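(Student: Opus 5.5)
The plan is to prove (i)$\Leftrightarrow$(ii) directly from Lemma~\ref{notinESI}, and then (ii)$\Leftrightarrow$(iii) by a purely combinatorial analysis of the neighborhood inclusions. Throughout I use the standing assumption of this section that $G$ has at least one edge. This excludes the degenerate $K_1$, where the only vertex is universal but $\es(K_1)=1$. More generally, an isolated vertex alone forms an ESI-set, so (i) fails whenever $G$ has an isolated vertex. I expect the cleanest route is to note this: any isolated vertex violates both (ii) and (iii) if we read "universal" as requiring a neighbor, so graphs with an isolated vertex are consistent with the equivalence once that convention is made explicit.

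For (i)$\Leftrightarrow$(ii) the key observation is that the empty set is always an ESI-set. Hence $\es(G)=0$ holds exactly when no vertex belongs to any ESI-set. By Lemma~\ref{notinESI}, a vertex $x$ belongs to no ESI-set if and only if some $y\in N(x)$ satisfies $N[y]\subseteq N[x]$. Applying this to every vertex gives the equivalence immediately.

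For (iii)$\Rightarrow$(ii) I split into the three vertex types.
\begin{itemize}
\item If $x$ is universal, take any neighbor $y$; then $N[y]\subseteq V(G)=N[x]$.
\item If $x$ has a twin $y$, then $y\in N(x)$ and $N[y]=N[x]$.
\item If $x$ is a support of a twin $u$, then $N[u]\subsetneq N[x]$. This forces $u\neq x$, and since $u\in N[u]\subseteq N[x]$, also $u\in N(x)$. So $y=u$ works.
\end{itemize}

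The main step is (ii)$\Rightarrow$(iii), since (ii) only yields an inclusion $N[y]\subseteq N[x]$, whereas (iii) needs either equality or a genuine twin below $x$.
\begin{enumerate}
\item Fix $x$ and build a sequence $x=x_0,x_1,x_2,\dots$ by repeatedly applying (ii): choose $x_{i+1}\in N(x_i)$ with $N[x_{i+1}]\subseteq N[x_i]$.
\item Each step either strictly decreases $|N[x_i]|$ or gives equality. Since $G$ is finite, strict decreases cannot continue forever.
\item Let $k$ be the first index with $N[x_{k+1}]=N[x_k]$. Then $x_k$ and $x_{k+1}$ are distinct adjacent vertices with equal closed neighborhoods, so $x_k$ is a twin.
\item If $k=0$, then $x$ itself is a twin.
\item If $k\ge 1$, the chain of inclusions gives $N[x_k]\subsetneq N[x]$, strict because $N[x_1]\subsetneq N[x_0]$. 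So $x$ is a support of the twin $x_k$.
\end{enumerate}
In all cases $x$ is of a type listed in (iii); the universal case is not even needed in this direction. Leaves are handled automatically by the chain argument: if $x_i$ is a leaf with neighbor $z$, condition (ii) forces $N[z]=\{x_i,z\}$, so $x_i$ and $z$ are twins.
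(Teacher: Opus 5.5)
Your proof is correct and follows the paper's route. You get (i)$\Leftrightarrow$(ii) straight from Lemma~\ref{notinESI}, and you reach (iii) by descending along closed-neighborhood inclusions until a twin pair appears; your iterated chain is just an unrolled version of the paper's inclusion-minimal choice of $v_2$ in its contrapositive proof of (i)$\Rightarrow$(iii). Your remark that $K_1$ must be excluded (its only vertex is universal yet $\es(K_1)=1$) is also right, and it is covered by the section's standing assumption that $G$ has at least one edge.
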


\begin{proof} 
$(i)\Rightarrow (ii)$. Assume that $\es(G)=0$. Since no nonempty ESI-set exists, no vertex $x\in V(G)$ can belong to an ESI-set. By Lemma \ref{notinESI}, this implies that for every $x\in V(G)$ there exists a neighbor $y\in N(x)$ such that $N[y]\subseteq N[x]$.

$(ii)\Rightarrow (i)$. Assume that for every $x\in V(G)$ there exists a neighbor $y\in N(x)$ such that $N[y]\subseteq N[x]$. By  Lemma \ref{notinESI} $x$ belongs to no ESI-set and therefore no nonempty ESI-set exists. So, $\es(G)=0$ follows.

$(i)\Rightarrow(iii)$.
Assume that there exists a vertex $v_1\in V(G)$ that is neither
universal, nor a twin, nor a support of a twin. We show that then $\es(G)>0$.

If $v_1$ belongs to some ESI-set, then $\es(G)>0$ and we are done. Otherwise, $v_1$ belongs to
no ESI-set. By Lemma~\ref{notinESI}, there exists a neighbor $u\in N(v_1)$ such that
$N[u]\subseteq N[v_1]$. Choose such a vertex $v_2\in N(v_1)$ with $N[v_2]\subseteq N[v_1]$
that is minimal with respect to set inclusion among all vertices $u\in N(v_1)$ satisfying
$N[u]\subseteq N[v_1]$.

If $v_2$ belongs to some ESI-set, then $\es(G)>0$ and we are done. Otherwise, $v_2$ belongs to
no ESI-set, and Lemma~\ref{notinESI} yields a neighbor $v_3\in N(v_2)$ such that
$N[v_3]\subseteq N[v_2]$. By the choice of $v_2$, this inclusion cannot be strict,
and hence $N[v_3]=N[v_2]$. Thus $v_2$ and $v_3$ are twins. Since $v_1$ is not a twin, we have also $N[v_2]\subset N[v_1]$, which means that $v_1$ is a support of a twin, a contradiction.

Therefore, $v_2$ must belong to some ESI-set, and consequently $\es(G)>0$.

$(iii)\Rightarrow (i)$.
Assume $(iii)$ and suppose for a contradiction that $\es(G)>0$. Let $B$ be a nonempty ESI-set
and pick $x\in B$. By $(iii)$, the vertex $x$ is universal, or a twin, or a support of a twin,
contradicting Corollary~\ref{notESI1}. Hence $\es(G)=0$.
\end{proof}

Recall that every dominating set is isolating, and hence $\io(G)\le \gamma(G)$ for every graph $G$. It is therefore natural to ask for which graphs equality $\gamma(G)=\io(G)$ holds. In \cite{lemanska}, Lemańska et al.\ characterize trees for which $\gamma(G)=\iota(G)$ holds. Furthermore, in \cite{BDJK}, Brešar et al.\ identify several graph families for which the equality is satisfied.

Theorem~\ref{zero} describes graphs with $\es(G)=0$. Although this class is fairly large, the condition $\es(G)=0$ is not necessary for
$\gamma(G)=\io(G)$. Indeed, for the star $K_{1,n-1}$ we have $\gamma(K_{1,n-1})=\io(K_{1,n-1})=1$,
while $\es(K_{1,n-1})=n-2>0$ for $n>2$ by Proposition~\ref{families}.

Nevertheless, $\es(G)=0$ is a sufficient condition for $\gamma(G)=\io(G)$. In fact, Theorem~\ref{zero} $(ii)$ implies that every vertex $x$ has a neighbor $y\in N(x)$ with
$N[y]\subseteq N[x]$, which forces any isolating set to be dominating. Consequently, $\gamma(G)=\io(G)$. Combining this with Theorem~\ref{gen_bound} yields the following standard lower bound; see, e.g.,~\cite{core}.

\begin{corollary}
\label{gam=io}
If $G$ is a graph with $\es(G)=0$, then $\g(G)=\io(G)\geq \left\lceil\frac{n(G)}{\Delta(G)+1}\right\rceil$. 
\end{corollary}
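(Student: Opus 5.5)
The statement has two parts, $\g(G)=\io(G)$ and the lower bound $\io(G)\ge \lceil n(G)/(\Delta(G)+1)\rceil$, and I will prove them separately. The equality needs only the inequality $\g(G)\le\io(G)$, since every dominating set is isolating and so $\io(G)\le\g(G)$ always holds.

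\emph{Every isolating set is dominating.} Let $A$ be an $\io(G)$-set and put $B=V(G)-N[A]$. By the definition of isolation, $B$ is independent. Moreover, every $v\in N(B)$ lies in $N[A]$ but not in $B$, and $B\cap N[A]=\emptyset$. Hence $v\in N(A)$ or $v\in A$. If $v\in A$, then $v$ is adjacent to a vertex of $B$, which is impossible because $B\cap N(A)=\emptyset$. So $v\in N(A)$, and $v$ has a neighbor in $A\subseteq V(G)-N[B]$. Thus $B$ satisfies the ES-condition, exactly as in the proof of Theorem~\ref{gen_bound}, and $B$ is an ESI-set. Since $\es(G)=0$, we get $B=\emptyset$, i.e.\ $N[A]=V(G)$. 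Therefore $A$ is a dominating set and $\g(G)\le|A|=\io(G)$.

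An alternative route for this step, matching the paper's remark, uses Theorem~\ref{zero}(ii). A vertex $x\in B$ has a neighbor $y$ with $N[y]\subseteq N[x]$. Then $y\notin B$ by independence, so $y\in N(A)$ and $y$ has a neighbor $a\in A$. But $a\in N[y]\subseteq N[x]$, so $x\in N[A]$, a contradiction. Either argument works; I would present the first, since it is the cleaner one.

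\emph{The lower bound.} Substitute $\es(G)=0$ into Theorem~\ref{gen_bound}, which gives $\io(G)\ge\lceil n(G)/(\Delta(G)+1)\rceil$. Equivalently, each vertex of a $\g(G)$-set dominates at most $\Delta(G)+1$ vertices.

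There is no genuine obstacle here. The only point needing care is checking that vertices of $N(B)$ cannot lie in $A$, which guarantees that their support in $A$ is indeed external to $N[B]$.
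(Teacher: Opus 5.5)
Your proof is correct, and the lower-bound step is the same as the paper's: substitute $\es(G)=0$ into Theorem~\ref{gen_bound}. For the equality $\g(G)=\io(G)$ your main argument takes a different route.

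\emph{Comparison of the two routes.} The paper goes through Theorem~\ref{zero}(ii). Every vertex $x$ has a neighbor $y$ with $N[y]\subseteq N[x]$, and this local structure forces every isolating set to be dominating. Your main argument bypasses that characterization. You reuse the observation from the proof of Theorem~\ref{gen_bound} that the residual set $V(G)-N[A]$ of an isolating set $A$ is itself an ESI-set, so $\es(G)=0$ forces it to be empty.

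\emph{What each route buys.} Yours is more self-contained, since it needs neither Lemma~\ref{notinESI} nor Theorem~\ref{zero}. It also proves more. For an $\io(G)$-set $A$, the set $A\cup(V(G)-N[A])$ is always dominating, and $|V(G)-N[A]|\le\es(G)$. Hence $\g(G)\le\io(G)+\es(G)$ holds for every graph, and the corollary is the case $\es(G)=0$. The paper's route instead gives a structural explanation through the neighborhood-inclusion condition.

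\emph{Your alternative argument.} This is the paper's route. There you pass from $y\notin B$ straight to $y\in N(A)$, which skips the case $y\in A$. That case is immediate, since then $x\in N(y)\subseteq N[A]$, a contradiction.
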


Now we turn our attention to graphs attaining the upper bound in Theorem \ref{alpha_up_bound}, that is, to graphs for which $\es(G)=\al(G)-1$. Let $B$ be an ESI-set of a graph $G$ and let $B'\subseteq B$. The \emph{external private
neighborhood} of $B'$ with respect to $B$, denoted by $\EPN(B',B)$, is defined by
$$\EPN(B',B)=N(B')- N(B-B').$$
Equivalently, $\EPN(B',B)$ consists of all vertices that are adjacent to at least one vertex of $B'$ and to no vertex of $B-B'$.
\medskip

\begin{proposition}
\label{LB-clique}
Let $G$ be a graph  with at least one edge.  Then $\es(G)=\al(G)-1$ if and only if either $G=K_n$ for some $n\geq 2$, or for every $\es(G)$-set $B$ the following two conditions hold:
\begin{itemize}
\item[(i)] the set $D=V(G)-N[B]$ induces a complete graph, and
\item[(ii)] for every subset $B'\subseteq B$, $\alpha(G[\EPN(B',B)\cup D]) \leq |B'|+1$.
\end{itemize}
\end{proposition}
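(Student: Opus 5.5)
The plan is to control every independent set of $G$ through a fixed $\es(G)$-set $B$, using the partition $V(G)=B\cup N(B)\cup D$ with $D=V(G)-N[B]$. The key observation is the following. Let $I$ be any independent set of $G$ and put $B'=B-I$. A vertex of $I\cap N(B)$ cannot be adjacent to a vertex of $I\cap B=B-B'$. Hence it lies in $N(B)-N(B-B')$. Since it is adjacent to some vertex of $B$, that vertex must be in $B'$, so it lies in $N(B')-N(B-B')=\EPN(B',B)$. Consequently
$$I\subseteq (B-B')\cup \EPN(B',B)\cup D ,$$
and $I\cap(\EPN(B',B)\cup D)$ is independent in $G[\EPN(B',B)\cup D]$. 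This gives
$$|I|\le |B|-|B'|+\alpha(G[\EPN(B',B)\cup D]).$$

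For sufficiency, if $G=K_n$ with $n\ge 2$, then Proposition~\ref{families}(iii) gives $\es(G)=0=\al(G)-1$. Otherwise, take any $\es(G)$-set $B$ satisfying (ii). Applying the displayed inequality to an $\al(G)$-set $I$ yields $\al(G)\le |B|-|B'|+|B'|+1=\es(G)+1$. Together with Theorem~\ref{alpha_up_bound} this gives equality. Condition (i) is not even needed in this direction; it turns out to be the special case $B'=\emptyset$ of (ii).

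For necessity, assume $\es(G)=\al(G)-1$ and $G\ncong K_n$, and let $B$ be any $\es(G)$-set. For (ii), suppose some $B'\subseteq B$ admits an independent set $J\subseteq \EPN(B',B)\cup D$ with $|J|\ge |B'|+2$. Then $(B-B')\cup J$ is independent. Indeed, vertices of $\EPN(B',B)$ are non-adjacent to $B-B'$ by definition, vertices of $D$ are non-adjacent to all of $B$, and $B$ itself is independent. This set has size at least $|B|+2=\al(G)+1$, a contradiction. For (i), if $D$ contained two non-adjacent vertices $x,y$, then $B\cup\{x,y\}$ would be independent of size $\al(G)+1$, again a contradiction. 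When $D=\emptyset$, condition (i) holds vacuously.

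The only delicate point I expect is the bookkeeping in the key observation. It must be checked that every vertex of $I\cap N(B)$ really falls into $\EPN(B',B)$ for the particular choice $B'=B-I$, and that the reverse construction $(B-B')\cup J$ is genuinely independent. Both follow directly from the definition $\EPN(B',B)=N(B')-N(B-B')$. The degenerate case $\es(G)=0$ should also be checked: there $\al(G)=1$ forces $G$ to be complete, so it is covered by the $K_n$ alternative.
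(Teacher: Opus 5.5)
Your proof is correct and takes essentially the same route as the paper. The necessity argument is the same, and your inclusion $I\subseteq (B-B')\cup \EPN(B',B)\cup D$ with $B'=B-I$ is the paper's observation $A-B\subseteq \EPN(B_2,B)\cup D$ with $B_2=B-A$. The only differences are cosmetic: you phrase sufficiency as a direct inequality rather than by contradiction, and you note that (i) is the case $B'=\emptyset$ of (ii), which removes the paper's separate case $B\subseteq A$.
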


\begin{proof}
Let $G$ be a graph  with at least one edge. 
Assume first that $\es(G)=\alpha(G)-1$. The statement clearly holds if $G\cong K_n$, $n\geq 2$. Assume henceforth that $G$ is not complete and let $B$ be an $\es(G)$-set and $D=V(G)-N[B]$. Then $|B|=\es(G)=\alpha(G)-1$ and $D\neq\emptyset$. For every $v\in D$,
the set $B\cup\{v\}$ is independent (since $v\notin N[B]$) and has size $|B|+1=\al(G)$, and hence
it is an $\al(G)$-set. Now suppose for a contradiction that the set $D$ does not induce a complete graph. Then there exist two nonadjacent vertices $x,y\in D$ and $B\cup\{x,y\}$ is an independent set. Therefore, $\alpha(G) \ge |B|+2 =\alpha(G)+1$, a contradiction. Hence $D$ induces a complete graph, proving condition $(i)$.

To prove condition $(ii)$, assume to the contrary there exists an $\es(G)$-set $B$ with a subset $B'\subseteq B$ such that $\alpha(G[\EPN(B',B)\cup D]) > |B'|+1$. Let $H=G[\EPN(B',B)\cup D]$ and let $I$ be an $\alpha(H)$-set. Then $I\cup (B-B')$ is an independent set of $G$
\[\alpha(G)\geq |I\cup (B-B')|>|B'|+1+|B|-|B'|=|B|+1=\es(G)+1,\]
contradicting the assumption that $\es(G)=\alpha(G)-1$.

Now assume that conditions $(i)$ and $(ii)$ hold. 
Let $B$ be an $\es(G)$-set and put
$D=V(G)-N[B]$. To the contrary assume that $\es(G)\leq \alpha(G) -2$ and let $A$ be an $\alpha(G)$-set.
If $B\subseteq A$, then $|A-B|\ge 2$ and $A-B\subseteq D$ (since $A$ is
independent), so $D$ contains two nonadjacent vertices, contradicting~$(i)$. 
Therefore $B\not\subseteq A$. Let $A'=A-B$, $B_1=B\cap A$ and $B_2=B-A$. Since $\es(G)\leq \alpha(G) -2$, it follows that 
\[|B_1|+|A'|=|A|=\alpha(G)\geq \es(G)+2=|B_1|+|B_2|+2\]
and therefore $|A'|\geq |B_2|+2$. 
Furthermore, since $A$ is independent, no vertex of $A'$ is adjacent to any vertex of $B_1$.
Consequently, every vertex of $A'$ lies either in $D$ or in $N(B_2)-N(B_1)
=\EPN(B_2,B)$. That is, 
$A'\subseteq \EPN(B_2,B)\cup D$, 
and therefore $$
\al\left(G[\EPN(B_2,B)\cup D]\right)\ge |A'|\ge |B_2|+2,$$
contradicting condition $(ii)$ applied to $B'=B_2$. Therefore $\es(G)=\al(G)-1$, as required.
\end{proof}

The necessity of condition~$(ii)$ is illustrated by the graph $G$ shown in Figure~\ref{conditionii}. We have $\es(G)=2$, and $B=\{5,8\}$ is the unique $\es(G)$-set. Then $D=V(G)\setminus N[B]$ induces a complete graph, so condition~(i) holds. However, for $B'=\{5\}$, $\EPN(B',B)=\{4,6\}$ and
$$\alpha(G[\EPN(B',B)\cup D]) \ge 3 > |B'|+1=2,$$
so condition~$(ii)$ fails. On the other hand, $\alpha(G)=4$ and $\es(G)=2$, and hence $\es(G)\ne \alpha(G)-1$.

\begin{figure}[!ht]
\begin{center}
	\begin{tikzpicture}[scale=0.6]
		\tikzstyle{crn}=[circle,fill=black,draw, inner sep=0pt, minimum size=5pt]
			
		\begin{scope}
			\node (a)[crn] at (0 cm,0 cm){};
            \node (b)[crn] at (0 cm,3 cm){};
            \node (c)[crn] at (1.5 cm,1.5 cm){};
            \node (d)[crn] at (3 cm,3 cm){};
            \node (e)[crn] at (4.5 cm,1.5 cm){};
            \node (f)[crn] at (3 cm,0 cm){};
            \node (g)[crn] at (1.5 cm,0 cm){};
            \node (h)[crn] at (1.5 cm,-1.5 cm){};
            
            \draw (c)--(a)--(b)--(c)--(d)--(e)--(f)--(c);
            \draw (c)--(g)--(h);

            \draw[anchor = east] (a) node {$1$};
            \draw[anchor = east] (b) node {$2$};
            \draw[anchor = east] (c) node {$3$};
            \draw[anchor = east] (d) node {$4$};
            \draw[anchor = east] (e) node {$5$};
            \draw[anchor = east] (f) node {$6$};
            \draw[anchor = east] (g) node {$7$};
            \draw[anchor = east] (h) node {$8$};
		\end{scope}
    \end{tikzpicture}
    
\caption{A graph $G$ showing that condition~$(ii)$ is necessary.}
\label{conditionii}
\end{center}
\end{figure}

\medskip

Next, we present an upper bound that depends on $\delta(G), \Delta(G)$ and $n(G)$. Two direct consequences follows, one for $k$-regular graphs where $\Delta(G)=\delta(G)=k$ and one for trees where $\delta(T)=1$.\\

\begin{proposition}\label{prop:delta}
For any graph $G$, with $\Delta(G)\geq 2$, we have
\[\es(G) \leq \frac{\Delta(G)(\Delta(G)-1)}{\Delta(G)^2+\Delta(G)(\delta(G)-1)+\delta(G)}n(G)\]
and this bound is sharp.
\end{proposition}

\begin{proof}
Let $B$ be an $\es(G)$-set and let $D=V(G)-N[B]$. Every vertex in $B$ is adjacent to at least $\delta(G)$ vertices in $N(B)$, while every vertex in $N(B)$ is adjacent to at least one vertex in $D$ and therefore it has at most $\Delta(G)-1$ vertices in $B$. Furthermore, every vertex in $D$ is adjacent to at most $\Delta(G)$ vertices in $N(B)$. Counting the edges between $B$ and $N(B)$ gives $\delta(G)|B|\leq (\Delta(G)-1)|N(B)|$. Also, counting the edges between $N(B)$ and $D$, we have $|N(B)|\leq \Delta(G)|D|$.
It now follows that 
\begin{align*}
    n(G)&= |B|+|N(B)|+|D|\\
    &\geq |B| +|N(B)|(1+1/\Delta(G))\\
    &\geq |B|+\left (\frac{\Delta(G)+1}{\Delta(G)}\right )\left (\frac{\delta(G)}{\Delta(G)-1}\right )|B|\\
    & = \left (\frac{\Delta(G)(\Delta(G)-1)+\delta(G)(\Delta(G)+1)}{\Delta(G)(\Delta(G)-1)}\right )|B|.
\end{align*}
Since $B$ is an $\es(G)$-set, rearranging the above inequality yields the desired bound.

For sharpness, let $G=K_{m,m}$ with $m\ge 2$. Then $\delta(G)=\Delta(G)=m$ and $n(G)=2m$, so the
bound gives $\es(G)\le m-1$. Equality follows from Proposition~\ref{families}.
\end{proof}

\begin{corollary}
For a $k$-regular graph $G$ we have
\[\es(G) \leq \frac{k-1}{2k}n(G).\]
\end{corollary}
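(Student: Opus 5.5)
This should follow directly from Proposition~\ref{prop:delta} by substituting $\Delta(G)=\delta(G)=k$. The hypothesis $\Delta(G)\ge 2$ of that proposition requires $k\ge 2$, so I would first treat the small cases separately.

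For $k=0$, the graph is edgeless, and the right-hand side $\frac{k-1}{2k}n(G)$ is not defined. So the statement must implicitly assume $k\ge 1$, and I would say so explicitly. For $k=1$, $G$ is a perfect matching $\frac{n}{2}K_2$. Each vertex $x$ has its unique neighbour $y$ with $N[y]=N[x]$, so by Theorem~\ref{zero} we get $\es(G)=0=\frac{k-1}{2k}n(G)$, and the bound holds with equality.

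For $k\ge 2$, substitute into the denominator of Proposition~\ref{prop:delta}:
\[
k^2+k(k-1)+k=2k^2.
\]
The numerator is $k(k-1)$, so the coefficient is
\[
\frac{k(k-1)}{2k^2}=\frac{k-1}{2k}.
\]

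There is no real obstacle here. The only care needed is the degenerate range $k\le 1$, which is why I handle it before the substitution. Sharpness is inherited from the $K_{m,m}$ example in the proof of Proposition~\ref{prop:delta}, since $K_{m,m}$ is $m$-regular.
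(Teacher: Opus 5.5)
Your proposal is correct and follows the paper's route exactly: the corollary is Proposition~\ref{prop:delta} with $\Delta(G)=\delta(G)=k$, where the denominator collapses to $2k^2$. Your separate handling of $k=1$ via Theorem~\ref{zero} (giving $\es(G)=0$), and your remark that $k=0$ must be excluded, are sensible additions, since the paper leaves the hypothesis $\Delta(G)\ge 2$ implicit here.
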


\begin{corollary}
For any tree $T\ncong K_1$ we have
\[\es(T) \leq \frac{\Delta(T)(\Delta(T)-1)}{\Delta(T)^2+1}n(T).\]
\end{corollary}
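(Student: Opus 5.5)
The statement is the tree corollary, so I would obtain it by specializing Proposition~\ref{prop:delta} to $\delta(T)=1$. For a tree $T\ncong K_1$ we have $\delta(T)=1$, because every nontrivial tree has a leaf. Substituting $\delta(G)=1$ into the denominator $\Delta(G)^2+\Delta(G)(\delta(G)-1)+\delta(G)$ gives $\Delta(T)^2+1$. The numerator $\Delta(T)(\Delta(T)-1)$ is unchanged, so the claimed inequality follows at once whenever $\Delta(T)\geq 2$, which is the hypothesis of Proposition~\ref{prop:delta}.

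The only remaining step is the case $\Delta(T)<2$, where Proposition~\ref{prop:delta} does not apply. Since $T\ncong K_1$ and $T$ is connected, this forces $T\cong K_2$, so $\Delta(T)=1$. Here the right-hand side equals $0$. On the other hand, $\es(K_2)=0$ by Proposition~\ref{families}(iii): both vertices of $K_2$ are universal, so by Corollary~\ref{notESI1} no vertex can lie in an ESI-set. The bound therefore also holds, with equality.

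I expect no real obstacle. The only point needing care is that the proof of Proposition~\ref{prop:delta} uses $\Delta(G)-1>0$ when it divides by $\Delta(G)-1$, which is exactly why $K_2$ has to be checked separately.
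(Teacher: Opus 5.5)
Your proof is correct and follows the paper, which obtains this corollary by substituting $\delta(T)=1$ into Proposition~\ref{prop:delta}. Your separate check of $T\cong K_2$ is a worthwhile addition: there $\Delta(T)=1$, so the hypothesis $\Delta(G)\geq 2$ of Proposition~\ref{prop:delta} fails, and the paper passes over this case without comment even though $K_2$ satisfies $T\ncong K_1$.
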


With the following construction we give a class of graphs attaining the bound in Proposition~\ref{prop:delta}. For $r<s$, these graphs satisfy $\delta(G)\ne \Delta(G)$.
Take $r\ge 1$ copies of $K_{1,s}$, for $s\ge 2$, and denote the center of the $k$-th copy by $c_k$ and its leaves by $b_{k,1},\dots,b_{k,s}$, where $k\in [r]$. 
For each $i\in [s]$, add $s-1$ new vertices $a_{i,1},\dots,a_{i,s-1}$, and join each vertex $a_{i,j}$, where $j\in [s-1]$, to all vertices $b_{k,i}$ for $k\in [r]$. 
Let the resulting graph be denoted by $G(r,s)$.
For $s\ge r$, we have $n(G(r,s))=r+rs+s(s-1)$, $\delta(G(r,s))=r$, and $\Delta(G(r,s))=s$. It follows from Proposition~\ref{prop:delta} that $\es(G(r,s))\le s(s-1)$.
Let $B=\{a_{i,j}: i\in [s], j\in [s-1]\}$. Then $B$ is an independent set and $V(G(r,s))- N[B]=\{c_1,\dots,c_r\}$. Since each $b_{k,i}$ is adjacent to $c_k$, it follows that $B$ is an ESI-set. Hence $\es(G(r,s))=s(s-1)$.
As a side effect, we also obtain that $\alpha(G(r,s))\ge \es(G(r,s))+r$, since $B\cup\{c_1,\dots,c_r\}$ is an independent set. In particular, $G_i=G(i,i)$ yields a family of graphs for which $\alpha(G_i)-\es(G_i)\ge i$, and hence the difference tends to infinity as $i\to\infty$.

\section{Trees}\label{tr}

In this section, we establish general upper and lower bounds for the externally supported independence number of trees, and characterize the trees for which these bounds are attained.

To prove the lower bound we use the following construction of an ESI-set $B$ for $T$, which we call a \emph{canonical ESI-set} of $T$. For each support vertex $w\in S(T)$ choose exactly one leaf
$t_w\in L(w)$, and define
$$B=L(T)-\{t_w: w\in S(T)\}.$$
Put $D=V(T)-N[B]$. Then $|B|=\ell(T)-s(T)$, and $B$ is an ESI-set: indeed, $B$ is
independent, and every vertex in $N(B)$ is a support of some leaf in $B$, hence it has a
neighbor in $D$, namely the chosen leaf adjacent to that support. In particular, both vertices of
$K_2$ are leaves and supports, so $\ell(K_2)-s(K_2)=0$.

Let $B$ be a canonical ESI-set and let $D=V(G)-N[B]$. The pair $(B,D)$ is called a \emph{canonical pair}. Define an operation called a \emph{switch at} $x\in S(T)$ as moving $x$ together with all its leaf neighbors to $D_x$, and moving the
chosen leaves $t_w$ (for each $w\in S(x)$) from $D$ to $B_x$. That is,
\[
B_x=(B- L(x))\cup \{t_w:w\in S(x)\}
\]
and
\[
D_x=\bigl(D-\{t_w:w\in S(x)\}\bigr)\cup\{x\}\cup L(x).
\]
Then $B_x$ is an ESI-set because
$D_x$ dominates $N(B_x)$.

Let $B'$ be an ESI-set and let $D'$ be a set dominating $N(B')$. Suppose that $x\in S(T)\cap D'$ and $t_x\in L(x)$. The operation called a \emph{reverse switch at} $x$ is the inverse of a switch at $x$, see Figure \ref{switch}. For each support vertex $w\in S(x)$ choose exactly one leaf
$t_w\in L(w)$, and define
 
\[
B^x=(B'-\{t_w:w\in S(x)\})\cup (L(x)-\{t_x\}) 
\]
and
\[
D^x=((D'-\{x\})-(L(x)-\{t_x\}))\cup \{t_w:w\in S(x)\}.
\]
Again, $D^x$ dominates $N(B^x)$ and thus $B^x$ is an ESI-set. 

\begin{figure}[H]
\begin{center}
	\begin{tikzpicture}[scale=0.45]
		\tikzstyle{crn}=[circle,fill=black,draw, inner sep=0pt, minimum size=5pt]
        \tikzstyle{bel}=[circle,fill=white,draw, inner sep=0pt, minimum size=5pt]
        \tikzstyle{kvadrat}=[rectangle,fill=white,draw, inner sep=0pt, minimum size=6pt]
			
		\begin{scope}
            \node[draw=none, fill=none, inner sep=0pt] at (0 cm, 7.5 cm) {$L(x)$};
			\node (a)[kvadrat] at (0 cm,6 cm){};
            \node[draw=none, fill=none, inner sep=0pt] at (0 cm, 4.2 cm) {$\vdots$};
            \node (b)[kvadrat] at (0 cm,2 cm){};
            \node (c)[crn] at (0 cm,0 cm){};
		\end{scope}
			
		\begin{scope}[shift={(3,0)}]
			\node (x)[bel] at (0 cm,3 cm){};
            \draw (x) node[anchor=south, yshift=2pt] {$x$};
            \draw (x)--(a);
            \draw (x)--(b);
            \draw (x)--(c);
        \end{scope}

		\begin{scope}[shift={(6,0)}]
            \node[draw=none, fill=none, inner sep=0pt] at (0 cm, 7.5 cm) {$S(x)$};
			\node (a1)[bel] at (0 cm,5.5 cm){};
            \node[draw=none, fill=none, inner sep=0pt] at (0 cm, 3.2 cm) {$\vdots$};
            \node (b1)[bel] at (0 cm,0.5 cm){};
            \draw (x)--(a1);
            \draw (x)--(b1);
		\end{scope}

		\begin{scope}[shift={(9,0)}]
			\node (a2)[kvadrat] at (0 cm,7 cm){};
            \node[draw=none, fill=none, inner sep=0pt] at (0 cm, 6.2 cm) {$\vdots$};
            \node (b2)[kvadrat] at (0 cm,5 cm){};
            \node (c2)[crn] at (0 cm,4 cm){};
            \draw (a1)--(a2);
            \draw (a1)--(b2);
            \draw (a1)--(c2);
            \node (a3)[kvadrat] at (0 cm,2 cm){};
            \node[draw=none, fill=none, inner sep=0pt] at (0 cm, 1.2 cm) {$\vdots$};
            \node (b3)[kvadrat] at (0 cm,0 cm){};
            \node (c3)[crn] at (0 cm,-1 cm){};
            \draw (b1)--(a3);
            \draw (b1)--(b3);
            \draw (b1)--(c3);
		\end{scope}

        \begin{scope}[shift={(11.5,0)}]
            \draw[-{>[scale=1.8]}, shorten >=-5pt] (0,5.5) -- (4,5.5) node[midway, above] {switch at $x$};
            \draw[{<[scale=1.8]}-, shorten <=-5pt] (0,0.5) -- (4,0.5) node[midway, above, align=center] {reverse \\ switch at $x$};
        \end{scope}

		\begin{scope}[shift={(18,0)}]
            \node[draw=none, fill=none, inner sep=0pt] at (0 cm, 7.5 cm) {$L(x)$};
			\node (aa)[crn] at (0 cm,6 cm){};
            \node[draw=none, fill=none, inner sep=0pt] at (0 cm, 4.2 cm) {$\vdots$};
            \node (bb)[crn] at (0 cm,2 cm){};
            \node (cc)[crn] at (0 cm,0 cm){};
		\end{scope}
			
		\begin{scope}[shift={(21,0)}]
			\node (y)[crn] at (0 cm,3 cm){};
            \draw (y) node[anchor=south, yshift=2pt] {$x$};
            \draw (y)--(aa);
            \draw (y)--(bb);
            \draw (y)--(cc);
        \end{scope}

		\begin{scope}[shift={(24,0)}]
            \node[draw=none, fill=none, inner sep=0pt] at (0 cm, 7.5 cm) {$S(x)$};
			\node (aa1)[bel] at (0 cm,5.5 cm){};
            \node[draw=none, fill=none, inner sep=0pt] at (0 cm, 3.2 cm) {$\vdots$};
            \node (bb1)[bel] at (0 cm,0.5 cm){};
            \draw (y)--(aa1);
            \draw (y)--(bb1);
		\end{scope}

		\begin{scope}[shift={(27,0)}]
			\node (aa2)[kvadrat] at (0 cm,7 cm){};
            \node[draw=none, fill=none, inner sep=0pt] at (0 cm, 6.2 cm) {$\vdots$};
            \node (bb2)[kvadrat] at (0 cm,5 cm){};
            \node (cc2)[kvadrat] at (0 cm,4 cm){};
            \draw (aa1)--(aa2);
            \draw (aa1)--(bb2);
            \draw (aa1)--(cc2);
            \node (aa3)[kvadrat] at (0 cm,2 cm){};
            \node[draw=none, fill=none, inner sep=0pt] at (0 cm, 1.2 cm) {$\vdots$};
            \node (bb3)[kvadrat] at (0 cm,0 cm){};
            \node (cc3)[kvadrat] at (0 cm,-1 cm){};
            \draw (bb1)--(aa3);
            \draw (bb1)--(bb3);
            \draw (bb1)--(cc3);
		\end{scope}

    \end{tikzpicture}
    
\caption{An example illustrating a switch at $x$ and a reverse switch at $x$. White square vertices belong to an ESI-set $X$ and black vertices belong to $V(G)-N[X]$.}
\label{switch}
\end{center}
\end{figure}

\begin{theorem}
\label{tree-lower}
If $T$ is a tree, then $\es(T)\geq \ell(T)-s(T)$, where equality holds if and only if $V(T)=S(T)\cup L(T)$ and $s(v) \leq \ell(v)-1$ for every $v\in S(T)$.
\end{theorem}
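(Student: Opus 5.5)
The inequality itself needs nothing new. The canonical ESI-set $B=L(T)-\{t_w: w\in S(T)\}$ constructed above is an ESI-set of size $\ell(T)-s(T)$, so $\es(T)\geq \ell(T)-s(T)$. The work is in the equality characterization. Throughout I assume $n(T)\geq 3$. The small cases are degenerate: $K_1$ has no leaves, and for $K_2$ the chosen leaf $t_w$ of one vertex is the other vertex, so the switch is ill-defined. These cases would be checked directly or excluded, as the paper presumably intends.

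\emph{Sufficiency.} Assume $V(T)=S(T)\cup L(T)$ and $s(v)\leq \ell(v)-1$ for all $v\in S(T)$, and let $B$ be any ESI-set. By Corollary~\ref{notESI1}, $B$ contains no support vertex, so $B\subseteq L(T)$. Partition $S(T)$ into three sets:
\begin{itemize}
\item $W$, the supports all of whose leaves lie in $B$;
\item $U$, the supports none of whose leaves lie in $B$;
\item the remaining supports.
\end{itemize}
Take $w\in W$. Then $w\in N(B)$, and every neighbor of $w$ is either one of its leaves (all in $B$) or a support vertex. So the ES-condition forces $w$ to have a support neighbor $u\notin N[B]$, which means $u\in U$. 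Assigning each $w\in W$ such a neighbor $u$ gives $|W|\leq \sum_{u\in U}s(u)\leq \sum_{u\in U}(\ell(u)-1)$. Counting leaves of $B$ support by support then yields
\[
|B|\leq \sum_{w\notin W\cup U}(\ell(w)-1)+\sum_{w\in W}\ell(w)
=\ell(T)-s(T)+|W|-\sum_{u\in U}(\ell(u)-1)\leq \ell(T)-s(T).
\]

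\emph{Necessity.} I argue by contrapositive: if either condition fails, I exhibit an ESI-set larger than the canonical one.

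First suppose some $v\in S(T)$ has $s(v)\geq \ell(v)$. Perform the switch at $v$ on the canonical pair. As noted above, $B_v$ is an ESI-set. Its size is $|B|-(\ell(v)-1)+s(v)\geq |B|+1$. The point needing care is that the switch is well defined, namely that $L(v)$ and the leaves $t_w$ with $w\in S(v)$ are distinct vertices. This holds because $n(T)\geq 3$ rules out two adjacent supports sharing a leaf.

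Otherwise some vertex lies outside $S(T)\cup L(T)$. Root $T$ anywhere and choose such a vertex $x$ of maximum depth. Since $x$ is not a leaf, it has at least one child. Every child of $x$ lies in $S(T)\cup L(T)$ by the choice of $x$, and is not a leaf because $x$ is not a support. Hence every child of $x$ is a support vertex. Set
\[
B'=B\cup\{t_w: w \text{ a child of } x\}.
\]
I then check the following.
\begin{itemize}
\item $B'$ is independent, since it consists of leaves, no two of which are adjacent for $n(T)\geq 3$.
\item $x\notin N[B']$, because $x$ is adjacent to no leaf.
\item Each child $w$ of $x$ now has all its leaves in $B'$, but its neighbor $x$ lies outside $N[B']$, so $w$ satisfies the ES-condition.
\item Every other vertex of $N(B')$ is a support that keeps its chosen leaf outside $N[B']$.
\end{itemize}
Thus $B'$ is an ESI-set of size at least $|B|+1$.

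I expect this second necessity case to be the main obstacle. The key is choosing $x$ so that its neighbors below it are all supports whose ES-condition $x$ can absorb. The maximum-depth choice is what I would try first, verifying carefully that the neighbor of $x$ toward the root causes no trouble: it is not in $N(B')$ unless it is a support, in which case it still retains its chosen leaf.
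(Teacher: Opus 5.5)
Your proof is correct for $n(T)\ge 3$, but the sufficiency direction takes a different route from the paper.

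\emph{Parts that match the paper.} The lower bound and the case $s(v)\ge \ell(v)$ are handled as in the paper: the canonical set, then a switch at $v$. For a vertex outside $S(T)\cup L(T)$, the paper walks to a nearest leaf $z$. It uses $u=v_{k-2}$ as the external support for $w=v_{k-1}$ and adds the single leaf $t_w$. Your deepest-vertex choice adds $t_w$ for every child $w$ of $x$. This is the same mechanism, a leaf-free non-leaf vertex supporting an adjacent support, just located differently.

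\emph{The sufficiency direction.} Here the routes genuinely differ. The paper argues by contradiction with an extremal choice. Among $\es(T)$-sets it takes one minimizing $|S(T)\cap D|$ and uses a reverse switch to force $S(T)\cap D=\emptyset$, hence $D\subseteq L(T)$. Only then does it count $|B\cap L(w)|\le \ell(w)-1$ for every support. Your charging argument instead bounds every ESI-set at once and needs no normalization. Each fully used support in $W$ is charged to a leaf-free support neighbour in $U$. So the surplus $|W|$ never exceeds the deficit $\sum_{u\in U}(\ell(u)-1)$, and it is clear exactly where $s(u)\le \ell(u)-1$ is used.

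It also avoids a point the paper's exchange step passes over. After a reverse switch at $x$, a support $w\in S(x)$ whose only leaf in $B$ was $t_w$ would drop into $D$. So $|S(T)\cap D|$ need not decrease unless one first observes a further fact. Maximality of $B$, with $x\in D$ already supporting $w$, forces all of $L(w)$ into $B$. The paper's route, in exchange, stays inside the switch/reverse-switch framework it set up just before the theorem.

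\emph{Small cases.} $K_1$ agrees with the statement: $\es(K_1)=1>0$ and $V(K_1)\neq S(K_1)\cup L(K_1)$. $K_2$, however, is not merely degenerate. It is a counterexample to the ``only if'' part: $\es(K_2)=0=\ell(K_2)-s(K_2)$, yet each vertex $v$ has $s(v)=1>0=\ell(v)-1$. So your restriction to $n(T)\ge 3$ is genuinely needed. You should state it as an exclusion, not as something that will be checked.
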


\begin{proof}
Let $T$ be a tree and let $B$ be its canonical ESI-set.  So,  $|B|=\ell(T)-s(T)$ and the lower bound $\es(T)\geq |B|=\ell(T)-s(T)$ follows. 
We next characterize when the equality holds.

We first prove that if $V(T)-(S(T)\cup L(T))\neq \emptyset$, then $\es(T)>\ell(T)-s(T)$.

Let $v\in V(T)-(S(T)\cup L(T))$ and let $z$ be a leaf at minimum distance from $v$ with $v=v_0,v_1,\dots,v_k=z$ the unique $v$--$z$ path. Then $v_{k-1}\in S(T)$. Since $v$ is not a support vertex, we have $k\ge 2$, and hence $u=v_{k-2}$ is a non-leaf vertex with $\ell(u)=0$ and $w=v_{k-1}\in S(u)$. 
In a canonical ESI-set $B$, the chosen leaf $t_w$ satisfies $t_w\notin B$. Define $B^\star=B\cup\{t_w\}$, which is clearly an independent set. Then we have $|B^\star|=|B|+1$. Let $D^\star=V(T)-N[B^\star]$. Since $u$ is not adjacent to any leaf, it is not adjacent to any vertex of $B^\star$, and thus $u\in D^\star$.
We now verify the ES-condition for $B^\star$. Every vertex of $N(B)$ keeps an external neighbor in $D^\star$ as in the canonical construction. Moreover, the only vertex that may be newly added to $N(B^\star)$ is $w$, 
which has a neighbor in $D^\star$, namely $u$. Hence $B^\star$ is an ESI-set and therefore $\es(T)\ge |B^\star|=|B|+1>|B|=\ell(T)-s(T)$.

Now we prove that the existence of a support vertex $v$ with $s(v)>\ell(v)-1$ forces the strict inequality $\es(T)>\ell(T)-s(T)$. Let $v\in S(T)$ with $s(v)>\ell(v)-1$ and perform a switch at $v$ on the canonical pair $(B,D)$ to obtain $(B_v,D_v)$.
Since $B\cap L(v)=L(v)-\{t_v\}$, we remove $\ell(v)-1$ leaves from $B$ and add $s(v)$
leaves (the vertices $t_w$ for every $w\in S(v)$) to obtain $B_v$. Therefore
$|B_v|=|B|-(\ell(v)-1)+s(v) > |B|$, since $s(v)>\ell(v)-1$, 
and hence $\es(T)\ge |B_v|>|B|=\ell(T)-s(T)$. 

Conversely, assume that $V(T)=S(T)\cup L(T)$ and $s(v)\le \ell(v)-1$ holds for every $v\in S(T)$.
Suppose for a contradiction that $\es(T)>\ell(T)-s(T)$, and let $B$ be an $\es(T)$-set with
$D=V(T)-N[B]$. Among all $\es(T)$-sets choose $B$ so that $|S(T)\cap D|$ is minimum.

If $S(T)\cap D\neq\emptyset$, pick $x\in S(T)\cap D$ and perform a reverse switch at $x$,
obtaining an ESI-set $B^{x}$. By construction,
$|B^{x}|=|B|-s(x)+(\ell(x)-1)\ \ge\ |B|$. Since $|B|=\es(T)$, we have $|B^{x}|=|B|$, while the reverse
switch removes $x$ from $D\cap S(T)$, contradicting the choice of $B$. Hence
$S(T)\cap D=\emptyset$.

Since $V(T)=S(T)\cup L(T)$ and $S(T)\cap D=\emptyset$, it follows that
$D\subseteq L(T)$, i.e.~every vertex of $D$ is a leaf.

We may further assume that $B\subseteq L(T)$. Indeed, if $x\in B$ were a non-leaf vertex, then $x\in S(T)$,
contradicting Corollary~\ref{notESI1}, which states that an ESI-set contains no support vertex.

Now fix a support vertex $w\in S(T)$. If $B$ contains all leaves of $L(w)$, then $w\in N(B)$,
and the ES-condition requires $w$ to have a neighbor in $D$, a contradiction.

Therefore $|B\cap L(w)|\le \ell(w)-1$ for every $w\in S(T)$, and hence
$$|B|
=\sum_{w\in S(T)}|B\cap L(w)|
\le \sum_{w\in S(T)}(\ell(w)-1)
=\ell(T)-s(T),$$
contradicting $|B|=\es(T)>\ell(T)-s(T)$. 
\end{proof}

\medskip

Now we turn our attention to the upper bound. If $T\ncong K_1$ is a tree, then Theorem~\ref{alpha_up_bound} yields $\es(T)\le \alpha(T)-1$. In what follows we characterize trees that attain this bound.
To do so we will need the following special families of trees,
depicted in Figure \ref{abcd}.

\begin{figure}[!ht]
\begin{center}
	\begin{tikzpicture}[scale=0.5]
		\tikzstyle{crn}=[circle,fill=black,draw, inner sep=0pt, minimum size=5pt]

        
\node [crn, name=b]   at (1,0) {};
\node [crn, name=c]   at (3,0) {};
\node [crn, name=d]   at (2,2) {};
\node[font=\large\bfseries] at (2,0.5){$\ldots$}; 
 
\node [crn, name=x, label=above:$x$]   at (4,4) {};

 \draw (d)--(b);
 \draw (x)--(d)--(c);

 \node [crn, name=b1]   at (5,0) {};
 \node [crn, name=c1]   at (7,0) {};
 \node [crn, name=d1]   at (6,2) {};
\node[font=\large\bfseries] at (6,0.5){$\ldots$};

 \draw (d1)--(b1);
 \draw (x)--(d1)--(c1);

\node[font=\large\bfseries] at (4,2){$\ldots$};

\node[font=\large\bfseries] at (4,-1.5){$T_1 \in \mathcal{A}$};

        
\node [crn, name=b2]   at (9,0) {};
 \node [crn, name=c2]   at (11,0) {};
 \node [crn, name=d2]   at (10,2) {};
\node[font=\large\bfseries] at (10,0.5){$\ldots$}; 
 
 \node [crn, name=x2, label=left:$x$]   at (12,4) {};

 \draw (d2)--(b2);
 \draw (x2)--(d2)--(c2);

 \node [crn, name=b3]   at (13,0) {};
 \node [crn, name=c3]   at (15,0) {};
 \node [crn, name=d3]   at (14,2) {};
\node[font=\large\bfseries] at (14,0.5){$\ldots$};

 \draw (d3)--(b3);
 \draw (x2)--(d3)--(c3);

\node[font=\large\bfseries] at (12,2){$\ldots$};

\node [crn, name=y2, label=left:$y$]   at (12,6) {};
\draw (x2)--(y2);

\node[font=\large\bfseries] at (12,-1.5){$T_2 \in \mathcal{B}$};



\node [crn, name=b4]   at (17,0) {};
\node [crn, name=c4]   at (19,0) {};
\node [crn, name=d4]   at (18,2) {};
\node[font=\large\bfseries] at (18,0.5){$\ldots$}; 
 
\node [crn, name=x4, label=above:$x$]   at (20,4) {};

\draw (d4)--(b4);
\draw (x4)--(d4)--(c4);
 
\node [crn, name=b5]   at (21,0) {};
\node [crn, name=c5]   at (23,0) {};
\node [crn, name=d5]   at (22,2) {};
\node[font=\large\bfseries] at (22,0.5){$\ldots$}; 
 
\draw (d5)--(b5);
\draw (x4)--(d5)--(c5);

\node[font=\large\bfseries] at (20,2){$\ldots$};


\node [crn, name=b5]   at (25,0) {};
\node [crn, name=c5]   at (27,0) {};
\node [crn, name=d5]   at (26,2) {};
\node[font=\large\bfseries] at (26,0.5){$\ldots$}; 
 
\node [crn, name=y5, label=above:$y$]   at (28,4) {};

\draw (d5)--(b5);
\draw (y5)--(d5)--(c5);
 
\node [crn, name=b6]   at (29,0) {};
\node [crn, name=c6]   at (31,0) {};
\node [crn, name=d6]   at (30,2) {};
\node[font=\large\bfseries] at (30,0.5){$\ldots$}; 
 
\draw (d6)--(b6);
\draw (y5)--(d6)--(c6);

\node[font=\large\bfseries] at (28,2){$\ldots$};

\draw (x4)--(y5);

\node[font=\large\bfseries] at (24,-1.5){$T_3 \in \mathcal{C}$};

\end{tikzpicture}
\vspace{-15pt}
\caption{Trees in $\mathcal{A}\cup \mathcal{B}\cup \mathcal{C}$.}
\label{abcd}
\end{center}
\end{figure}

Let $\mathcal{A}$ be the family of trees constructed as follows.
Start with a star $K_{1,t}$, where $t\ge 2$, with center $x$.
For each leaf $v$ of this star, attach $k_v\ge 1$ new leaves to $v$.

To construct a tree $T$ in the family $\mathcal{B}$, 
start with a star $K_{1,t}$, where $t\ge 2$, with center $x$. For each leaf $v$ with the exception of precisely one leaf $y$ of this star, attach $k_v\ge 1$ new leaves to $v$ (i.e., among the neighbors of $x$, only $y$ remains a leaf in $T$).

Finally, let $\mathcal{C}$ be the family of trees constructed as follows.
Start with a \emph{double star} $S_{a,b}$, where $a,b\ge 1$, that is,
the tree obtained from an edge $xy$ by attaching $a$ leaves to $x$
and $b$ leaves to $y$.
For each leaf $v$ of $S_{a,b}$, attach $k_v\ge 1$ new leaves to $v$.

We first compute the independence number of trees in $\mathcal{A}$, $\mathcal{B}$ and $\mathcal{C}$.
We will use the following result.

\begin{lemma}
\cite{martinez}
\label{mar}
If $T$ is a tree of order $n\geq 3$, then there exists an $\al(T)$-set containing all the leaves of $T$.
\end{lemma}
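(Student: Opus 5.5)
We prove the statement, a result of Mart\'{\i}nez cited from \cite{martinez}, by a leaf-exchange argument. Let $T$ be a tree of order $n\ge 3$. Among all $\al(T)$-sets, choose $A$ so that $|A\cap L(T)|$ is maximum. Suppose for a contradiction that some leaf $u$ is not in $A$, and let $w$ be its support vertex.

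Since $n\ge 3$, the vertex $w$ is not a leaf: otherwise $T\cong K_2$. Hence $u$ and $w$ are the only possible vertices of $A$ adjacent to or equal to $u$.

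\emph{Case $w\notin A$.} The vertex $u$ has no neighbor in $A$, so $A\cup\{u\}$ is independent. This contradicts the maximality of $|A|=\al(T)$.

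\emph{Case $w\in A$.} Set $A'=(A-\{w\})\cup L(w)$. No leaf in $L(w)$ lies in $A$, because each is adjacent to $w\in A$. Every leaf in $L(w)$ has $w$ as its only neighbor, so $A'$ is independent. Moreover, $|A'|=|A|-1+\ell(w)\ge |A|$, so $A'$ is an $\al(T)$-set. Since $w$ is not a leaf, $A'$ contains at least one more leaf than $A$ (namely $u$). This contradicts the choice of $A$.

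In both cases we reach a contradiction, so $A$ contains every leaf of $T$.

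The only point that needs care is the possibility that the removed vertex $w$ is itself a leaf. This happens exactly when $T\cong K_2$, and that case is excluded by the hypothesis $n\ge 3$.
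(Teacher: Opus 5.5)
Your proof is correct and complete. The paper gives no argument of its own for this lemma; it simply imports it from \cite{martinez}, a work by Cabrera Mart\'{\i}nez, Peterin and Yero rather than by Mart\'{\i}nez alone. Your extremal exchange argument supplies a self-contained justification, and it is the standard one: take an $\alpha(T)$-set with the most leaves; a missing leaf $u$ can either be added outright or can replace its support $w\in A$.

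What your route buys beyond the citation is a clear view of where the hypotheses enter. Acyclicity is never used; the only structural fact needed is that $w$ is not itself a leaf. So the same argument shows that every graph without a $K_2$ component has a maximum independent set containing all its degree-one vertices. The hypothesis $n\ge 3$ is exactly what rules out $K_2$, where the statement genuinely fails.

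Two cosmetic remarks. First, swapping $w$ for the single leaf $u$ already suffices. In fact $|A'|=|A|-1+\ell(w)\le \alpha(T)$ forces $\ell(w)=1$ in that case. Second, the sentence beginning ``Hence $u$ and $w$\dots'' follows from $u$ being a leaf, not from $w$ being a non-leaf.
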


\begin{lemma}
\label{alfa-A}
Let $T\in\mathcal{A}\cup \mathcal{B}$, and let $x$ be the center of the initial star in the construction of $T$. If $B$ is the set of all vertices at distance $2$ from $x$, then $\alpha(T)=|B|+1$.
\end{lemma}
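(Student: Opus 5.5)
Let me fix notation for the plan. Let $T\in\mathcal{A}\cup\mathcal{B}$ with center $x$, and let $N(x)=\{v_1,\dots,v_t\}$, $t\ge 2$. In $\mathcal{A}$, every $v_i$ is a support vertex whose leaves are exactly the $k_{v_i}\ge 1$ attached vertices. In $\mathcal{B}$, the same holds for all $v_i\neq y$, while $y$ is itself a leaf of $T$. The vertices at distance $2$ from $x$ are exactly the attached leaves, so $B=\bigcup_{v_i\neq y}L(v_i)$ (with no exception in case $\mathcal{A}$). The plan has two halves: exhibit an independent set of size $|B|+1$, and show that no independent set is larger.

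For the lower bound I take $B\cup\{x\}$ when $T\in\mathcal{A}$. It is independent because $x$ is adjacent only to the $v_i$, which are at distance $1$ from $x$. When $T\in\mathcal{B}$, I take $B\cup\{y\}$. It is independent since $y$ is adjacent only to $x$. In fact $B\cup\{y\}=L(T)$ in this case. Either way $\alpha(T)\ge |B|+1$.

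For the upper bound I partition $V(T)$ into closed-neighborhood-like blocks that each hold at most one vertex of an independent set, except the leaf blocks. The cleanest route is to write $V(T)=\{x\}\cup\bigcup_{v_i\neq y}\bigl(\{v_i\}\cup L(v_i)\bigr)\,(\cup\{y\})$. Let $I$ be any independent set.
\begin{itemize}
\item For each support vertex $v_i$, the set $I\cap(\{v_i\}\cup L(v_i))$ has size at most $|L(v_i)|$. This holds because if $v_i\in I$ then no leaf of $v_i$ is in $I$, and $|L(v_i)|\ge 1$.
\item The remaining part, $\{x\}$ in case $\mathcal{A}$ or $\{x,y\}$ in case $\mathcal{B}$, is a clique (a vertex or an edge), so it contributes at most $1$.
\end{itemize}
Summing gives $|I|\le |B|+1$.

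Alternatively, I could invoke Lemma~\ref{mar} to take an $\alpha(T)$-set containing all leaves, which forces all the $v_i$ out of it, leaving only $x$ as a possible extra vertex. However, the direct counting above avoids the case analysis, so I will use that.

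The only real care point is case $\mathcal{B}$: $y$ must be treated together with $x$ rather than as a support block, since $y$ is a leaf adjacent to $x$. No step is expected to be a genuine obstacle.
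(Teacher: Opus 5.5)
Your proof is correct, but it gets the upper bound by a different route than the paper.

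The paper invokes Lemma~\ref{mar} to fix an $\alpha(T)$-set $I$ containing all leaves of $T$. It then observes that no support vertex can lie in $I$. In case $\mathcal{B}$ this forces $I=L(T)=B\cup\{y\}$. In case $\mathcal{A}$ it forces $I\subseteq B\cup\{x\}$, because every $v_i$ is adjacent to a leaf of $B\subseteq I$.

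You instead bound an arbitrary independent set directly. You partition $V(T)$ into the stars $\{v_i\}\cup L(v_i)$ and the clique $\{x\}$ or $\{x,y\}$. Each star holds at most $|L(v_i)|$ vertices of $I$, since $|L(v_i)|\ge 1$, and the clique holds at most one.

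Your argument does not depend on the cited Lemma~\ref{mar} and treats $\mathcal{A}$ and $\mathcal{B}$ uniformly. It also carries over verbatim to the family $\mathcal{C}$ of Lemma~\ref{alfa-B}, using the clique $\{x,y\}$ together with the stars around the remaining support vertices. The paper's route is shorter once Lemma~\ref{mar} is available, and it matches how Lemma~\ref{alfa-B} is handled.

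One small point: in your lower bound you check only that $x$ (respectively $y$) is nonadjacent to $B$. You should also note that $B$ itself is independent, which is immediate because each vertex of $B$ is a leaf whose unique neighbor is some $v_i$.
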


\begin{proof} Let $T\in\mathcal{A}\cup \mathcal{B}$, and let $x$ be the central vertex of the initial star in the construction of $T$. By Lemma \ref{mar}, there exists an $\al(T)$-set $I$ containing all the leaves of $T$.

If $T\in\mathcal{B}$, the leaves of $T$ are precisely $y$ together with 
the vertices at distance $2$ from $x$, that is, $L(T)=B\cup\{y\}$. Every vertex in $V(T)-L(T)$ is a support of a leaf, hence no such vertex can belong to $I$. Therefore 
$I=L(T)$, and consequently,
$\al(T)=|I|=|L(T)|=|B|+1$.

Now let $T\in\mathcal{A}$.
All leaves of $T$ are at distance $2$ from $x$, so $L(T)=B$. Thus $B\subseteq I$.
Moreover, $x$ is not adjacent to any vertex of $B$, so $B\cup\{x\}$ is independent and has
size $|B|+1$. Since every neighbor of $x$ is the support of at least one leaf in $B$, no
neighbor of $x$ can be added to an independent set containing $B$. Hence $B\cup\{x\}$ is
maximum and $\al(T)=|B|+1$.

\end{proof}

\begin{lemma}
\label{alfa-B}
If $T\in\mathcal{C}$ and $B$ is the set of all leaves of $T$, then $\alpha(T)=|B|+1$.
\end{lemma}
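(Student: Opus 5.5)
Fix $T\in\mathcal{C}$ with central edge $xy$. Let $x_1,\dots,x_a$ be the $a$ vertices attached to $x$ and $y_1,\dots,y_b$ the $b$ vertices attached to $y$ in the double star $S_{a,b}$. By construction each $x_i$ and each $y_j$ receives at least one new leaf, so every $x_i,y_j$ is a support vertex. Also $x$ and $y$ are not leaves of $T$, since $\deg(x)=a+1\ge 2$ and $\deg(y)=b+1\ge 2$. Hence the leaf set $L(T)=B$ consists exactly of the attached leaves at distance $2$ from the edge $xy$. We will prove the two inequalities $\alpha(T)\ge |B|+1$ and $\alpha(T)\le |B|+1$, following the pattern of the case $T\in\mathcal{A}$ in Lemma~\ref{alfa-A}.

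\emph{Lower bound.} No leaf is adjacent to $x$: the neighbors of $x$ are $y$ and the $x_i$, none of which is a leaf. Since $B$ is independent, $B\cup\{x\}$ is an independent set of size $|B|+1$. This gives $\alpha(T)\ge |B|+1$.

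\emph{Upper bound.} Since $n(T)\ge 6\ge 3$, Lemma~\ref{mar} gives an $\alpha(T)$-set $I$ with $B=L(T)\subseteq I$. Every $x_i$ and $y_j$ is adjacent to a leaf in $I$, so none of them lies in $I$. Therefore $I\subseteq B\cup\{x,y\}$. Because $x$ and $y$ are adjacent, $I$ contains at most one of them, and so $|I|\le |B|+1$.

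Combining the two bounds gives $\alpha(T)=|B|+1$. No step here is a real obstacle; the only point needing care is verifying that $x$ and $y$ are not leaves (because $a,b\ge 1$) while every $x_i$ and $y_j$ is a support vertex (because each $k_v\ge 1$). This is what makes $L(T)=B$ and lets Lemma~\ref{mar} be applied.
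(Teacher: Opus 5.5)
Your proof is correct and takes the same approach as the paper: you take an $\alpha(T)$-set containing all leaves via Lemma~\ref{mar}, exclude the support vertices, and note that $B\cup\{x\}$ is independent while at most one of the adjacent vertices $x,y$ can be added. Your explicit checks that $n(T)\ge 6$ and that $x,y$ are not leaves simply spell out what the paper uses implicitly.
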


\begin{proof} 
Let $T\in\mathcal{C}$ and let $xy$ be the central edge of the underlying double star $S_{a,b}$.
Let $B=L(T)$ be the set of all leaves of $T$. By Lemma~\ref{mar}, there exists an $\al(T)$-set
$I$ containing $B$.
Every vertex of $V(T)-(B\cup\{x,y\})$ is a support vertex of a leaf from $B$ and therefore cannot belong to $I$. Moreover, neither $x$ nor $y$ is
adjacent to any vertex of $B$, so both $B\cup\{x\}$ and $B\cup\{y\}$ are independent. Since
$x$ and $y$ are adjacent, no independent set containing $B$ can contain both of them.
Therefore, both $B\cup\{x\}$ and $B\cup\{y\}$ are maximum independent sets, and hence $\al(T)=|B|+1$.
\end{proof}

\begin{theorem}
\label{tree-up}
Let $T$ be a nontrivial tree. Then $\es(T)=\alpha(T)-1$ if and only if $T$ is a star $K_{1,t}$, where $t\ge 1$, or $T\in \mathcal{A}\cup \mathcal{B} \cup \mathcal{C}$.
\end{theorem}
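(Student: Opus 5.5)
The plan is to prove sufficiency by exhibiting explicit ESI-sets of size $\al(T)-1$, and necessity by using Proposition~\ref{LB-clique}(i) to pin down the structure of $T$ around an $\es(T)$-set.

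\emph{Sufficiency.} For stars, $\es(K_2)=0=\al(K_2)-1$. For $t\ge 2$, Proposition~\ref{families}(iv) gives $\es(K_{1,t})=t-1=\al(K_{1,t})-1$. For $T\in\mathcal{A}\cup\mathcal{B}$, let $B$ be the set of vertices at distance $2$ from $x$.
\begin{itemize}
\item $B$ consists of leaves, so it is independent.
\item $N(B)$ is the set of neighbors of $x$ that received pendant leaves.
\item $x\notin N[B]$, so $x\in V(T)-N[B]$ and every vertex of $N(B)$ has the external neighbor $x$.
\end{itemize}
Hence $B$ is an ESI-set, and Lemma~\ref{alfa-A} gives $\es(T)\ge |B|=\al(T)-1$. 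For $T\in\mathcal{C}$, take $B=L(T)$. Then $N(B)$ consists of the former leaves of $S_{a,b}$, each adjacent to $x$ or $y$, and $\{x,y\}=V(T)-N[B]$. So $B$ is an ESI-set, and Lemma~\ref{alfa-B} gives $\es(T)\ge\al(T)-1$. In every case Theorem~\ref{alpha_up_bound} supplies the reverse inequality.

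\emph{Necessity.} Assume $\es(T)=\al(T)-1$ and that $T\ne K_2$; the case $T=K_2$ is a star. Then $T$ is not complete, so by Proposition~\ref{LB-clique}(i), for an $\es(T)$-set $B$ the set $D=V(T)-N[B]$ induces a complete graph. The set $D$ is nonempty, since otherwise $B\cup\{v\}$ cannot exceed $B$ and we would get $\es(T)=\al(T)$ against Theorem~\ref{alpha_up_bound}; one can also argue directly from the proof of Proposition~\ref{LB-clique}. As $T$ is a tree, $D=\{x\}$ or $D=\{x,y\}$ with $xy\in E(T)$. The vertex set splits as $V(T)=B\cup N(B)\cup D$, and we have:
\begin{itemize}
\item every $w\in N(B)$ has a neighbor in $D$ by the ES-condition;
\item vertices of $B$ have neighbors only in $N(B)$;
\item $D$ has no neighbor in $B$.
\end{itemize}

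The key step is to show that every $b\in B$ is a leaf of $T$. Suppose $b$ had two neighbors $w_1,w_2\in N(B)$. Each $w_i$ has a neighbor in $D$, and $D$ is a single vertex or an edge. Then $b,w_1,w_2$ together with the path in $D$ between their $D$-neighbors would close a cycle, which is a contradiction.

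Also, no two vertices of $N(B)$ are adjacent: such an edge together with their paths to the connected set $D$ would again create a cycle. A vertex of $N(B)$ cannot have two neighbors in $D$ either. Hence $T$ is determined as follows. Each $w\in N(B)$ is attached to exactly one vertex of $D$ and carries $k_w\ge1$ pendant leaves from $B$. There are no further vertices, by connectivity and the partition.

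It remains to read off the cases. Let $a$ and $b$ be the numbers of vertices of $N(B)$ attached to $x$ and to $y$, respectively.
\begin{itemize}
\item If $D=\{x\}$ and $a=1$, then $T$ is a star centered at the unique $w$; if $a\ge2$, then $T\in\mathcal{A}$.
\item If $D=\{x,y\}$ and $a,b\ge1$, then $T\in\mathcal{C}$.
\item If $D=\{x,y\}$, $b=0$ and $a\ge1$, then $y$ is a leaf at $x$, and $T\in\mathcal{B}$ with $t=a+1\ge2$.
\item If $D=\{x,y\}$ and $a=b=0$, then $T=K_2$.
\end{itemize}

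The step I expect to be most delicate is the no-cycle argument forcing $B\subseteq L(T)$ and the attachment structure of $N(B)$. The remaining risk is bookkeeping in the boundary cases: I must check that each degenerate case lands in a listed family. For example, $a=1,b=0$ gives $t=2$ in $\mathcal{B}$, and condition $(ii)$ of Proposition~\ref{LB-clique} is not needed at all.
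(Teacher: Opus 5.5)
Your proposal is correct and follows the paper's proof essentially step by step. Sufficiency uses the same ESI-sets together with Lemmas~\ref{alfa-A} and~\ref{alfa-B}. Necessity uses Proposition~\ref{LB-clique}(i) to force $T[D]\in\{K_1,K_2\}$, then the ES-condition plus acyclicity to show that $T-D$ consists of stars, each attached to $D$ by its center. You get this through cycle arguments on the partition $B\cup N(B)\cup D$ rather than component by component, which is only a cosmetic difference.

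One small repair: your stated reason for $D\neq\emptyset$ is muddled. The clean argument is that $D=\emptyset$ would force $N(B)=\emptyset$ by the ES-condition, so $V(T)=B$ would be independent, which is impossible in a nontrivial tree. A similar observation rules out the case $D=\{x\}$, $a=0$ that you omitted.
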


\begin{proof}
If $T$ is a star $K_{1,t}$, where $t\ge 1$, then $\al(T)=t$ and $\es(T)=t-1$ by Proposition \ref{families}, so $\es(T)=\al(T)-1$.

Now assume $T\in\mathcal{A}\cup\mathcal{B}$ and let $x$ be the center of the initial star in the construction of $T$. If $B$ is the set of vertices at distance $2$ from $x$, then $B$ is independent. Moreover, every vertex in $N(B)$ is adjacent to $x$, and hence has a neighbor in
$D=V(T)-N[B]$ (indeed, $x\in D$). Thus $B$ is an ESI-set. By Lemma~\ref{alfa-A},
$\al(T)=|B|+1$, so $|B|=\al(T)-1$. Therefore $\es(T)\ge |B|=\al(T)-1$, and together with
Theorem~\ref{alpha_up_bound} we obtain $\es(T)=\al(T)-1$.

Finally, let $T\in\mathcal{C}$ and let $B$ be the set of all leaves of $T$. Then $B$ is independent.
Every neighbor of a vertex in $B$ is a support vertex adjacent to $x$ or to $y$, hence it has a
neighbor in $D=\{x,y\}\subseteq V(T)-N[B]$. Thus $B$ is an ESI-set and $\es(T)\ge |B|$.
By Lemma~\ref{alfa-B} we have $|B|=\al(T)-1$, so again Theorem~\ref{alpha_up_bound} implies
$\es(T)=\al(T)-1$.

\medskip

To prove the converse, assume that $T$ is a nontrivial tree with $\es(T)=\alpha(T)-1$.
If $\alpha(T)=1$, then $T= K_2$, and we are done. 
Hence assume $\alpha(T)\ge 2$ and fix an $\es(T)$-set $B$. By Proposition~\ref{LB-clique}, the subgraph $W$ induced by $D=V(T)-N[B]$ is complete.
Since $T$ is a tree, this implies  $W\in \{K_1,K_2\}$.

First we prove the claim that every component of $T-W$ is a star whose center is the unique vertex adjacent to a vertex of $W$.
Let $C$ be a component of $T-W$. Because $T$ is a tree and $W$ is connected, $C$ has a unique vertex $c\in V(C)$ adjacent to precisely one vertex of $W$. We cannot have $C=K_1$, since $T$ is connected and every vertex adjacent to a vertex of $W$ must have a neighbor in $B$.
Hence there exists a vertex in $V(C)-\{c\}$.
Let $u\in V(C)-\{c\}$ be arbitrary. Then $u\notin W$, so $u\in N[B]=B\cup N(B)$.
If $u\in N(B)$, then the ES-condition forces $u$ to have a neighbor in $W$,
contradicting the fact that $c$ is the unique vertex of $C$ adjacent to a vertex in $W$.
Hence $u\notin N(B)$ and therefore $u\in B$. Since $u$ was arbitrary, we have $V(C)-\{c\}\subseteq B$.
Since $B$ is independent, there are no edges among vertices of $V(C)-\{c\}$, so every edge of $C$ is incident with $c$.
Thus $C$ is a star centered at $c$, as claimed. We now distinguish two possible cases.

\smallskip\noindent
\emph{Case 1: $W=K_1$.}
Let $V(W)=\{x\}$. Then, by the claim above, every component of $T-W$ is a star of order at least 2 whose center is the unique neighbor of $x$ in that component.

If $T-W$ has exactly one component, $T$ is a star. Otherwise every neighbor of $x$ is the center of a star component of order at least $2$, and hence $T\in\mathcal{A}$.

\smallskip\noindent
\emph{Case 2: $W=K_2$.} Let $W$ contain adjacent vertices $x$ and $y$.
By the above claim, every component of $T-W$ is a star of order at least 2 attached to exactly one of $x$ and $y$. If both $x$ and $y$ have a
neighbor outside $W$, then $T\in\mathcal{C}$. On the other hand, if only one of $x$ and $y$ has a neighbor outside $W$, then $T\in\mathcal{B}$.
\end{proof}

Since $K_2$ can be viewed as a star, and $P_4 \in \mathcal{B}$,\, $P_3, P_5\in \mathcal{A}$ and  $P_6\in \mathcal{C}$, Theorem \ref{tree-up} implies the following.

\begin{corollary}
For $n\ge 2$, $\es(P_n)=\alpha(P_n)-1$ if and only if $2\le n\le 6$.
\end{corollary}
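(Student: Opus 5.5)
The plan is to apply Theorem~\ref{tree-up} to $T=P_n$ and combine it with Proposition~\ref{families}(i), using the standard value $\alpha(P_n)=\lceil n/2\rceil$. Both directions reduce to deciding for which $n$ the path $P_n$ is a star or lies in $\mathcal{A}\cup\mathcal{B}\cup\mathcal{C}$.

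For sufficiency, we check the small cases against the families. $P_2=K_{1,1}$ and $P_3=K_{1,2}$ are stars. $P_4\in\mathcal{B}$: take the star $K_{1,2}$ with center $x$ and leaves $y,v$, and attach one leaf to $v$. $P_5\in\mathcal{A}$: take $K_{1,2}$ and attach one leaf to each of its two leaves. $P_6\in\mathcal{C}$: take the double star $S_{1,1}=P_4$ and attach one leaf to each of its two leaves. Theorem~\ref{tree-up} then gives $\es(P_n)=\alpha(P_n)-1$ for $2\le n\le 6$. As a sanity check, Proposition~\ref{families}(i) gives the values $1,1,2,2$ for $n=3,\dots,6$, which equal $\lceil n/2\rceil-1$.

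For necessity, there are two routes.

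\emph{Direct route.} For $n\ge 7$, compare $\lceil n/4\rceil$ with $\lceil n/2\rceil-1$. Writing $n=4q+r$, the difference $\lceil n/2\rceil-1-\lceil n/4\rceil$ is at least $1$ once $n\ge 7$. At $n=7$ it is $3-2=1$; at $n=8$ it is $3-2=1$; and the difference is nondecreasing in steps. This is a short case check on $r$.

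\emph{Structural route.} Every tree in $\mathcal{A}\cup\mathcal{B}\cup\mathcal{C}$ has diameter at most $5$. Every vertex lies within distance $2$ of $x$ (for $\mathcal{A}$ and $\mathcal{B}$), or within distance $2$ of $x$ or $y$ (for $\mathcal{C}$), so the diameter is at most $4$, $4$, and $5$ respectively. Stars have diameter at most $2$. Since $P_n$ has diameter $n-1\ge 6$ for $n\ge 7$, $P_n$ lies in none of these classes, and Theorem~\ref{tree-up} gives $\es(P_n)\ne\alpha(P_n)-1$.

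I would present the structural route and mention the numerical check as confirmation. The only mildly delicate step is bounding the diameter of each family, or doing the arithmetic with ceilings over the four residues mod $4$. Neither is a real obstacle.
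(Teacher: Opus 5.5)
Your proposal is correct and is essentially the paper's argument. The paper lists the family containing each of $P_2,\dots,P_6$ and invokes Theorem~\ref{tree-up}. Your diameter bound (at most $5$ on $\mathcal{A}\cup\mathcal{B}\cup\mathcal{C}$) supplies the necessity direction that the paper leaves implicit, and your comparison of $\lceil n/4\rceil$ with $\lceil n/2\rceil-1$ is a valid independent check. Your treatment of $P_3$ as the star $K_{1,2}$ is also more accurate than the paper's remark that $P_3\in\mathcal{A}$, since every member of $\mathcal{A}$ has at least five vertices.
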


Theorem \ref{tree-up}  also shows that the highly centralized structure of a star is optimal for maximizing the externally supported independence number among trees of fixed order $n$. In other words, if $T$ is a tree, then $\es(T)\le n-2$, with equality if and only if $T$ is a star.

\medskip

\section{Linear algorithm for trees}

From an algorithmic point of view, observe that the \textsc{ESI Set} problem can be expressed in monadic second-order logic (MSO). Therefore, Courcelle's theorem \cite{Cour} implies the existence of a linear-time algorithm for graphs of bounded treewidth. Furthermore, the Courcelle--Makowsky--Rotics theorem \cite{CoMR} guarantees a polynomial-time algorithm for graphs of bounded clique-width. Since trees have treewidth~1, the above result already implies the existence of a linear-time algorithm for trees. In the remainder of this section, we present such an algorithm explicitly.

Let $T$ be a rooted tree with root $r$.
For each vertex $v\in V(T)$, denote by $Ch(v)$ its set of children and by $T_v$ the subtree of $T$ rooted at $v$.
Let $B\subseteq V(T)$ be an $\es(T)$-set.
Each vertex is assigned exactly one type: $S$ if $v\in B$, $N$ if $v\in N(B)$, and $R$ if $v\in V(T)-N[B]$. 

For every vertex $v$, type $x\in\{S,N,R\}$ and parent type
$p\in\{S,N,R\}$ we define $DP[v][x][p]$
to be the maximum number of vertices of type $S$ in the subtree $T_v$, subject to $v$ having type $x$ and its parent having type $p$. Impossible states take value $-\infty$.

The algorithm processes the vertices of $T$ in postorder, that is, after all their children have been processed.
For each vertex $v$, the values corresponding to all six possible states $DP[v][x][p]$ are computed from the previously computed values of its children. The ESI-number of $T$ is then obtained from the possible states of the root.



\begin{algorithm}[H]
\caption{ESI-number of a tree}
\begin{algorithmic}[0]
\Require rooted tree $T$ with root $r$
\Ensure $\es(T)$

\State compute a postorder ordering of the vertices

\ForAll{$v$ in postorder}

    \Statex \textbf{(A) $v$ has type $S$}
    \State $$ DP[v][S][N] \gets
           1 + \sum_{u\in Ch(v)} DP[u][N][S]$$

    \Statex \textbf{(B) $v$ has type $R$}
    \ForAll{$p \in \{N,R\}$}
        \State $$ DP[v][R][p] \gets
        \sum_{u\in Ch(v)}
        \max\{DP[u][N][R], DP[u][R][R]\}$$
    \EndFor

    \Statex \textbf{(C) $v$ has type $N$}
    \ForAll{$p \in \{S,N,R\}$}
        \State compute
        $$
        DP[v][N][p] =
        \max \sum_{u\in Ch(v)} DP[u][x_u][N]
        $$
        \hspace{1.2cm}over assignments $x_u\in\{S,N,R\}$ such that
        \begin{itemize}[leftmargin=1.7cm]
            \item if $p\neq S$, then at least one child has $x_u=S$,
            \item if $p\neq R$, then at least one child has $x_u=R$.
        \end{itemize}
    \EndFor

\EndFor

\State \Return $\max\{DP[r][S][N], DP[r][N][N], DP[r][R][N]\}$
\end{algorithmic}
\end{algorithm}

\begin{theorem}
If $T$ is a rooted tree with root $r$, then Algorithm ESI-number of a tree computes $\es(T)$ in $O(n(T))$ time and $O(n(T))$ space.
\end{theorem}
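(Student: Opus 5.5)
Our plan is to prove correctness and complexity separately. For correctness we use induction over the postorder, and we state the invariant as follows. For every non-root vertex $v$, every $x\in\{S,N,R\}$ and every parent type $p$, the value $DP[v][x][p]$ equals the maximum of $|B\cap V(T_v)|$ over all labelings of $T_v$ in which $v$ has type $x$, all vertices of $T_v$ are labelled consistently with a hypothetical parent of type $p$, and the ESI constraints hold at every vertex of $T_v$. Here consistency and the ESI constraints mean the following. An $S$-vertex has no $S$-neighbor. An $R$-vertex has no $S$-neighbor. An $N$-vertex has at least one $S$-neighbor and at least one $R$-neighbor. If no such labeling exists, the value is $-\infty$.

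These local conditions exactly describe ESI-sets: $B$ is the set of $S$-vertices, $N(B)$ is the set of $N$-vertices, and $D=V(T)-N[B]$ is the set of $R$-vertices. The only global requirement is the ES-condition, and it is exactly the condition that each $N$-vertex has an $R$-neighbor. So an ESI-set is the same as a labeling satisfying these constraints, with the type of each vertex determined by $B$.

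The inductive step is a case check against the three recurrences.

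\begin{itemize}
\item \textbf{(A), $v$ of type $S$.} All neighbors of $v$ must be of type $N$, so only $p=N$ is feasible. Every child has type $N$ with parent type $S$, and the $S$-neighbor requirement of each child is then satisfied by $v$.
\item \textbf{(B), $v$ of type $R$.} No neighbor may be of type $S$, so $p\in\{N,R\}$ and children are of type $N$ or $R$. An $N$-child still needs an $S$-neighbor, and it must find one among its own children. This is exactly what $DP[u][N][R]$ encodes, since the parent type $R\ne S$ forces an $S$-child.
\item \textbf{(C), $v$ of type $N$.} The requirements for $v$ are an $S$-neighbor and an $R$-neighbor. The parent can supply one of them, which gives the two conditions listed in the algorithm. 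Each child sees a parent of type $N$, and that is compatible with every type of the child.
\end{itemize}

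Independence of the subtrees given the types of $v$ and its parent justifies summing over children. At the root we use the dummy parent type $N$, which imposes nothing: it neither forbids nor supplies $S$ or $R$. Hence the maximum over the three root states equals $\es(T)$. Impossible states such as $DP[v][S][S]$ remain $-\infty$. Leaves are covered by the empty sum, which correctly gives $-\infty$ in case (C) whenever a child is required.

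The main obstacle is implementing (C) in time linear in $\deg(v)$ rather than exponential in it. We plan to use the standard exchange trick. Let $m_u=\max_{x}DP[u][x][N]$ and $\Sigma=\sum_{u\in Ch(v)}m_u$. If only an $S$-child is required, the value is $\Sigma-\min_u(m_u-DP[u][S][N])$, and symmetrically when only an $R$-child is required. If both are required, we need distinct children $u\ne w$ minimizing $(m_u-DP[u][S][N])+(m_w-DP[w][R][N])$. This is found by keeping the two smallest deficits of each kind, or by noting that a child with $m_u$ attained by $S$ or by $R$ already has deficit $0$.

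Each vertex is then handled in $O(1+|Ch(v)|)$ time, and the postorder itself takes $O(n)$. Summing over vertices gives $O(n(T))$ time. There are $O(1)$ table entries per vertex, which gives $O(n(T))$ space. The $\pm\infty$ arithmetic is handled in the usual way.
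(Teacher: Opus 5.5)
Your proposal is correct and follows essentially the same route as the paper: you use the same case analysis justifying rules (A)--(C), sum over disjoint child subtrees, and evaluate the root with a dummy parent of type $N$, and your explicit invariant and labeling characterization of ESI-sets make precise what the paper leaves informal. The only real difference is how case (C) is done in linear time: you use an exchange argument with the two smallest deficits of each kind, while the paper runs a four-state update over the children recording which of the two requirements (an $S$-child, an $R$-child) is already met; both give $O(|Ch(v)|)$ work per vertex.
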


\begin{proof}
For every vertex $v\in V(T)$, $x\in\{S,N,R\}$ and $p\in\{S,N,R\}$, the entry $DP[v][x][p]$ is defined as the maximum number of vertices of type $S$ in the subtree $T_v$, subject to $v$ having type $x$ and the parent of $v$ having type $p$; impossible states take value $-\infty$. Since the vertices are processed in postorder (that is, after all their children have been processed), when $v$ is processed all values $DP[u][\cdot][\cdot]$ for children $u\in Ch(v)$ are already computed. We next justify the three update rules. \\

\noindent\textbf{(A) $v$ has type $S$.} \\ 
If $v$ has type $S$, then $v\in B$ and contributes $1$ to the objective. Moreover, since $v$ has type $S$ every child $u\in Ch(v)$ must have type $N$. Therefore, the optimal value in $T_v$ equals 
$DP[v][S][N] \;=\; 1 + \sum_{u\in Ch(v)} DP[u][N][S],$
which is exactly the update rule in case~(A). \\

\noindent\textbf{(B) $v$ has type $R$.} \\
If $v$ has type $R$, then $v\notin N[B]$ and the parent of $v$ is not of type $S$. Hence, only $p\in\{N,R\}$ is possible. Furthermore, no child of $v$ can be of type $S$, so each child $u\in Ch(v)$ may only have type $N$ or $R$. Since the subtrees $T_u$ are vertex-disjoint, the optimal choice is obtained by selecting independently, for each child $u$, the better of the two possible states. Consequently, for $p\in\{N,R\}$ we have
$DP[v][R][p] \;=\; \sum_{u\in Ch(v)} \max\{DP[u][N][R],\,DP[u][R][R]\},$
which is the update rule in case~(B). \\

\noindent\textbf{(C) $v$ has type $N$.} \\
If $v$ has type $N$, then $v\in N(B)$ and it must be adjacent to at least one vertex of type $S$. If $p=S$, this requirement is already satisfied; otherwise (if $p\neq S$) at least one child of $v$ must have type $S$. 
Furthermore, ES-condition imposes that if the parent of $v$ is not of type $R$ (if $p\neq R$), then at least one child of $v$ must have type $R$. If $p=R$, then this condition is already satisfied.
For a fixed assignment of types $x_u\in\{S,N,R\}$ to all children $u\in Ch(v)$ the best achievable value in $T_u$ equals $DP[u][x_u][N]$. Since the subtrees $T_u$ are disjoint, the total number of vertices of type $S$ in $T_v$ equals $\sum_{u\in Ch(v)} DP[u][x_u][N]$. Maximizing this sum over all assignments $(x_u)_{u\in Ch(v)}$ that satisfy the above two constraints yields exactly the update rule in case~(C). \\

Since cases (A)--(C) exhaust all possibilities for the type of $v$, it follows that, for every vertex $v$, the computed entries $DP[v][x][p]$ equal the optimal number of vertices of type $S$ in $T_v$ under the stated boundary conditions. In particular, the optimal value for the whole tree is obtained at the root $r$. 
Since the root has no parent, we evaluate it with parent type $N$. In this way, all requirements involving the parent of the root must be satisfied within the subtree rooted at $r$.
The three possible types of the root are $S$, $N$, and $R$, corresponding to the states $DP[r][S][N]$, $DP[r][N][N]$, and $DP[r][R][N]$, respectively.
Therefore, the returned value 
$\max\{DP[r][S][N],\,DP[r][N][N],\,DP[r][R][N]\}$ is equal to $\es(T)$. This completes the proof of correctness. \\ 

For the time complexity, observe that we process each vertex of the tree exactly once in postorder. For every vertex $v$, we consider only six possible states $(x,p)$, namely $(S,N)$, $(R,N)$, $(R,R)$, $(N,S)$, $(N,N)$, and $(N,R)$. 
In case (C), it suffices to know whether a child of type $S$ has been selected and whether a child of type $R$ has been selected. 
Therefore, only four states need to be maintained, depending on whether neither requirement, only the requirement of having a child of type $S$, only the requirement of having a child of type $R$, or both requirements have already been satisfied.
When processing a child, we update a constant number of states, and therefore each child requires $O(1)$ time. Hence, the total time spent at vertex $v$ is $O(|Ch(v)|)$.
Since $\sum_{v \in V(T)} |Ch(v)| = |E(T)| = n(T) - 1$, the overall running time is $O(n(T))$. The table $DP$ stores $O(1)$ values per vertex, therefore the space usage is $O(n(T))$.

\end{proof}



\section{Conclusion}

In this work we initiated the study of $\es(G)$, which improves the lower bound for $\io(G)$ as presented in Theorem \ref{low_bound}. We showed that the ESI set problem is NP-complete for general graphs and presented a polynomial-time algorithm for $\es(T)$ for every tree $T$. It would be interesting to better understand the boundary between graph classes for which $\es(G)$ can be computed in polynomial time and those for which the problem is NP-complete.

\begin{problem}
Identify graph classes where it is NP-hard to derive $\es(G)$ or where there exists a polynomial algorithm to determine $\es(G)$.  
\end{problem}

Several additional connections of $\es(G)$ with other graph parameters can be expected. We mention two such connections. Let $G$ be a graph and $f:V(G)\rightarrow \{0,1,2\}$ a map. If every vertex $v\in V(G)$ with $f(v)=0$ has a neighbor $u$ with $f(u)=2$, then $f$ is a \emph{Roman domination function} of $G$. The \emph{Roman domination number} $\g_R(G)$ of $G$ is the minimum weight $w(f)=\sum_{v\in V(G)}f(v)$ over all Roman domination functions $f$ of $G$, see \cite{CDHH}. Now, if $B$ is an ESI-set of $G$ together with a dominating set $D$ of $N(B)$, then we can define a function
\begin{equation*}
f(v)=\left\{\begin{array}{ll}
0 & \mbox{if } v\in N(B), \\[0.15cm]
1 & \mbox{if } v\in B, \\[0.15cm]
2 & \mbox{if } v\in D, 
\end{array}\right.
\end{equation*}
which is clearly a Roman dominating function as every vertex from $N(B)$ is dominated by $D$. (Notice also that we can exchange $B$ and $D$ in the above function and we still obtain a Roman dominating function as every vertex from $N(B)$ must have a neighbor in $B$ as well.) Moreover, this Roman dominating function has additional properties that vertices of weight one form an independent set and there are no edges between vertices of weight one and weight two. This may be closer to independent Roman domination where in addition there are also no edges between vertices of weight two, see the seminal work \cite{ATRM}.  Nevertheless, it seems an interesting approach for possible connections between graphs where $\es(G)$-set yields an optimal (independent) Roman dominating function. 

\begin{problem}
Describe all graphs where an $\es(G)$-set yields an optimal (independent) Roman dominating function.
\end{problem}

The other parameter is a packing, more precisely, a $3$-packing of a graph $G$. Recall that a set $P\subseteq V(G)$ forms a $k$-\emph{packing} of $G$ if the distance between any two vertices of $P$ is more than $k$ and $k$-\emph{packing number} $\rho_k$ of $G$ is the maximum cardinality of a $k$-packing of $G$, see \cite{BKPY} and the references therein. Clearly, $\rho_1(G)=\al(G)$, but here we are more interested in $\rho_3(G)$. Let $P$ be a $3$-packing of $G$. Notice first that if $v\in P$ is a support of a leaf $u$, then $P'=(P\cup\{u\})-\{v\}$ is a $3$-packing as well and we may choose a $3$-packing $P$ without any supports. Further, if $u,v\in P$ are at distance four, then the neighbors of $u$ and $v$ that belong to a shortest $u,v$-path have a neighbor on this path outside of $N[P]$. So, a $3$-packing $P$ is an ESI-set if for every $v\in P$ and every $u\in N(v)$ we have $N(u)-N[v]\neq \emptyset$. Whence the following problem.    

\begin{problem}
Describe all graphs where $\es(G)=\rho_3(G)$.
\end{problem}

Let us conclude with the following generalization obtained from $\io_{\mathcal{F}}(G)$ where $\mathcal{F}\neq\{K_2\}$. In our case of $\es(G)$ we described the maximum cardinality of an independent set $B$ of vertices such that $V(G)-N[B]$ dominates $N(B)$. To obtain a generalization, we need to consider a set of vertices $B_{\mathcal{F}}$ such that no graph from $\mathcal{F}$ is contained in an induced subgraph of $G$ on vertices from $B_{\mathcal{F}}$ and that every vertex from $N[B_{\mathcal{F}}]-B_{\mathcal{F}}$ is dominated by $V(G)-N[B_{\mathcal{F}}]$. Such an approach yields an upper bound for $\io_{\mathcal{F}}(G)$ similar to the one for $\io(G)$ in Theorem \ref{gen_bound}.   

\vskip 1pc \noindent{\bf Acknowledgments.} 
This work has been supported by the European Commission's Horizon Europe Research and Innovation programme through the Marie Skłodowska-Curie Actions Staff Exchanges (MSCA-SE) under Grant Agreement no.101182819 (COVER: (C)ombinatorial (O)ptimization for (V)ersatile Applications to (E)merging u(R)ban Problems).
I.P. was partially supported by the Slovenian Research and Innovation Agency (ARIS), program no.~P1--0297. 
A. T. was partially supported by the Slovenian Research and Innovation Agency (ARIS), program no.~P1--0297 and project no.~J1--70016. 
D.B. was partially supported by the Slovenian Research and Innovation Agency (ARIS), program no.~P2--0065 and project no.~J1--70016.\\
We would also like to thank M. Milani\v c for his helpful comments and suggestions regarding the algorithmic aspects of the paper.

\vskip 1pc \noindent{\bf Statements and Declarations.}
Authors have no conflict of interest to declare.

\bibliographystyle{plain}
\bibliography{references}

@article {ATRM,
    AUTHOR = {Adabi, M. and Targhi, E.E. and Jafari Rad, N. and
              Moradi, M.S.},
     TITLE = {Properties of independent {R}oman domination in graphs},
   JOURNAL = {Australas. J. Combin.},
  FJOURNAL = {The Australasian Journal of Combinatorics},
    VOLUME = {52},
      YEAR = {2012},
     PAGES = {11--18},
}

@article {BaBS,
    AUTHOR = {Bartolo, K. and Borg, P. and Scicluna, D.},
     TITLE = {Isolation of squares in graphs},
   JOURNAL = {Discrete Math.},
    VOLUME = {347},
      YEAR = {2024},
    NUMBER = {12},
     PAGES = {Paper No. 114161, 11},
       DOI = {10.1016/j.disc.2024.114161},
       URL = {https://doi.org/10.1016/j.disc.2024.114161},
}

@article {Borg,
    AUTHOR = {Borg, P.},
     TITLE = {Isolation of cycles},
   JOURNAL = {Graphs Combin.},
    VOLUME = {36},
      YEAR = {2020},
    NUMBER = {3},
     PAGES = {631--637},
       DOI = {10.1007/s00373-020-02143-2},
       URL = {https://doi.org/10.1007/s00373-020-02143-2},
}

@article {BoGo,
    AUTHOR = {Boyer, G. and Goddard, W.},
     TITLE = {Disjoint isolating sets and graphs with maximum isolation number},
   JOURNAL = {Discrete Appl. Math.},
    VOLUME = {356},
      YEAR = {2024},
     PAGES = {110--116},
       DOI = {10.1016/j.dam.2024.05.022},
       URL = {https://doi.org/10.1016/j.dam.2024.05.022},
}

@article {BoGo2,
    AUTHOR = {Boyer, G. and Goddard, W.},
     TITLE = {Bounds on independent isolation in graphs},
   JOURNAL = {Discrete Appl. Math.},
    VOLUME = {372},
      YEAR = {2025},
     PAGES = {143--149},
       DOI = {10.1016/j.dam.2025.04.016},
       URL = {https://doi.org/10.1016/j.dam.2025.04.016},
}

@article {BoGH,
    AUTHOR = {Boyer, G. and Goddard, W. and Henning, M.A.},
     TITLE = {On total isolation in graphs},
   JOURNAL = {Aequationes Math.},
    VOLUME = {99},
      YEAR = {2025},
    NUMBER = {2},
     PAGES = {623--633},
       DOI = {10.1007/s00010-024-01057-1},
       URL = {https://doi.org/10.1007/s00010-024-01057-1},
}

@article {BKPY,
    AUTHOR = {Bo\v{z}ovi\'{c}, D. and Kelenc, A. and Peterin,
              I. and Yero, I.G.},
     TITLE = {Incidence dimension and 2-packing number in graphs},
   JOURNAL = {RAIRO Oper. Res.},
    VOLUME = {56},
      YEAR = {2022},
    NUMBER = {1},
     PAGES = {199--211},
      ISSN = {2804-7303},
       DOI = {10.1051/ro/2022001},
       URL = {https://doi.org/10.1051/ro/2022001},
}

@article {BDJK,
    AUTHOR = {Bre\v{s}ar, B. and Dravec, T. and Johnston, D.P. and Kuenzel, K. and Rall, D.F. and Tepeh, A.},
    TITLE = {Isolation number: Cartesian and lexicographic products and generalized Sierpiński graphs},
    JOURNAL = {arXiv:2508.16338v1},
    YEAR = {2025},
       DOI = {},
       URL = {https://doi.org/10.48550/arXiv.2508.16338},
}

@article{martinez,
    AUTHOR = {Cabrera Martinez, A. and Peterin, I. and Yero, I.G.},
    TITLE = {Independent transversal total domination versus total domination in trees},
    JOURNAL = {Discuss. Math. Graph Theory},
    VOLUME = {41},
    YEAR = {2021},
    PAGES = {213--224},
}

@article{caro,
  title={Partial domination-the isolation number of a graph},
  author={Caro, Y. and Hansberg, A.},
  journal={Filomat},
  volume={31},
  number={12},
  pages={3925--3944},
  year={2017},
  publisher={JSTOR}
}

@article {ChCZ,
    AUTHOR = {Chen, S. and Cui, Q. and Zhong, L.},
     TITLE = {A characterization of graphs with maximum {$k$}-clique isolation number},
   JOURNAL = {Discrete Math.},
    VOLUME = {348},
      YEAR = {2025},
    NUMBER = {9},
     PAGES = {Paper No. 114531, 13},
       DOI = {10.1016/j.disc.2025.114531},
       URL = {https://doi.org/10.1016/j.disc.2025.114531},
}

@article {CDHH,
    AUTHOR = {Cockayne, E.J. and Dreyer, Jr., P.A. and Hedetniemi, S.M. and Hedetniemi, S.T.},
     TITLE = {Roman domination in graphs},
   JOURNAL = {Discrete Math.},
    VOLUME = {278},
      YEAR = {2004},
    NUMBER = {1-3},
     PAGES = {11--22},
       DOI = {10.1016/j.disc.2003.06.004},
       URL = {https://doi.org/10.1016/j.disc.2003.06.004},
}

@article {Cour,
    AUTHOR = {Courcelle, B.},
     TITLE = {The monadic second-order logic of graphs. I. Recognizable sets of finite graphs},
   JOURNAL = {Inform. Comput.},
    VOLUME = {85},
      YEAR = {1990},
    NUMBER = {1},
     PAGES = {12--75},
       DOI = {doi:10.1016/0890-5401(90)90043-H}
}

@article {CoMR,
    AUTHOR = {Courcelle, B. and Makowsky, J.A. and Rotics, U.},
     TITLE = {Linear time solvable optimization problems on graphs of bounded clique-width},
   JOURNAL = {Theory Comput. Systems},
    VOLUME = {33},
      YEAR = {2000},
    NUMBER = {2},
     PAGES = {125--150},
       DOI = {doi:10.1007/s002249910009}
}

@book{core,
  title={Domination in graphs: Core concepts},
  author={Haynes, T.W. and Hedetniemi, S.T. and Henning, M.A.},
  year={2023},
  publisher={Springer}
}

@article {LMSS,
    AUTHOR = {Lema\'{n}ska, M. and Mora, M. and Souto-Salorio, M.J.},
     TITLE = {Graphs with isolation number equal to one third of the order},
   JOURNAL = {Discrete Math.},
    VOLUME = {347},
      YEAR = {2024},
    NUMBER = {5},
     PAGES = {Paper No. 113903, 10},
       DOI = {10.1016/j.disc.2024.113903},
       URL = {https://doi.org/10.1016/j.disc.2024.113903},
}

@article {LHHF,
    AUTHOR = {Lewis, J. and Hedetniemi, S.T. and Haynes, T.W. and Fricke, G.H.},
     TITLE = {Vertex-edge domination},
   JOURNAL = {Util. Math.},
    VOLUME = {81},
      YEAR = {2010},
     PAGES = {193--213},
}

@article {ZiZy,
    AUTHOR = {Ziemann, R. and \.{Z}yli\'{n}ski, P. },
     TITLE = {Vertex-edge domination in cubic graphs},
   JOURNAL = {Discrete Math.},
    VOLUME = {343},
      YEAR = {2020},
    NUMBER = {11},
     PAGES = {112075, 14},
       DOI = {10.1016/j.disc.2020.112075},
       URL = {https://doi.org/10.1016/j.disc.2020.112075},
}

@article {Zyli,
    AUTHOR = {\.{Z}yli\'{n}ski, P. },
     TITLE = {Vertex-edge domination in graphs},
   JOURNAL = {Aequationes Math.},
    VOLUME = {93},
      YEAR = {2019},
    NUMBER = {4},
     PAGES = {735--742},
       DOI = {10.1007/s00010-018-0609-9},
       URL = {https://doi.org/10.1007/s00010-018-0609-9},
}

@book{Garey,
  author = {Garey, M.R. and Johnson, D.S.},
  title = {Computers and Intractability: A Guide to the Theory of NP-Completeness},
  publisher = {W. H. Freeman},
  year = {1979}
}

@article{lemanska,
  title={Isolation number versus domination number of trees},
  author={Lema{\'n}ska, M. and Souto-Salorio, M.J. and Dapena, A. and Vazquez-Araujo, F.J.},
  journal={Mathematics},
  volume={9},
  number={12},
  pages={1325},
  year={2021},
  publisher={MDPI}
}

\end{document}